\newcommand{\Z}{\mathbb{Z}}
\newcommand{\E}{\mathbb{E}}
\renewcommand{\P}{\mathbb{P}}
\newcommand{\f}{\frac}
\newcommand{\ind}[1]{\mathbf 1 \{#1 \}}
\renewcommand{\emptyset}{\varnothing}
\renewcommand{\phi}{\varphi}
\newcommand{\meet}{\text{\:---\:}}
\newcommand{\lrl}{\longleftrightarrow}
\newcommand{\lrlb}{\overset{\B}{\longleftrightarrow}}
\newcommand{\ra}{\rightarrow}
\newcommand{\la}{\leftarrow}
\newcommand{\raw}{\rightsquigarrow}
\newcommand{\law}{\leftsquigarrow}
\newcommand{\ras}{\Rightarrow}
\newcommand{\las}{\Leftarrow}
\newcommand{\dil}[2]{\overleftarrow{D}^{({#2})}_{#1}}
\newcommand{\dir}[2]{\overrightarrow{D}^{({#2})}_{#1}}
\newcommand{\harr}{\overset{\BB}{\longleftrightarrow}}
\newcommand{\cev}[1]{\reflectbox{\ensuremath{\vec{\reflectbox{\ensuremath{#1}}}}}}
\renewcommand{\b}{\bullet}
\renewcommand{\L}{\cev{\b}}
\newcommand{\R}{\vec{\b}}
\newcommand{\B}{\dot{\b}}
\newcommand{\BB}{\hat{\b}}
\newtheorem{theorem}{Theorem}
\newtheorem{lemma}[theorem]{Lemma}
\newtheorem{proposition}[theorem]{Proposition}
\newtheorem*{claim}{Claim}
\newtheorem{remark}[theorem]{Remark}
\theoremstyle{definition}
\title{Four-parameter coalescing ballistic annihilation}
\author[Affeld]{Kimberly Affeld} \email{kaffeld@vassar.edu}
\author[Dean]{Christian Dean} \email{deanmchris@gmail.com}
\author[Junge]{Matthew Junge} \email{Matthew.Junge@baruch.cuny.edu}
\author[Lyu]{Hanbaek Lyu}  \email{hlyu@math.wisc.edu}
\author[Panish]{Connor Panish} \email{wilsonc1@ufl.edu}
\author[Reeves]{Lily Reeves} \email{lreeves@caltech.edu}
\thanks{Affeld, Dean, Junge, Panish, and Reeves were partially supported by NSF grants DMS-2115936 and DMS-2115936. Reeves was partially supported by NSF grant DMS-2303316. Lyu was partially supported by NSF grant DMS-2010035. Part of this research was completed during the 2022 Baruch College Discrete Math REU partially supported by NSF grant DMS-2051026. }
\begin{document}
	\maketitle
	
	\begin{abstract}
		In coalescing ballistic annihilation, infinitely many particles move with fixed velocities across the real line and, upon colliding, either mutually annihilate or generate a new particle. We compute the critical density in symmetric three-velocity systems with four-parameter reaction equations. 
  	\end{abstract}

	\section{Introduction}
	
    Annihilating systems model $A+B \to \varnothing$ chemical reactions  \cite{toussaint1983particle, bramson1991asymptotic,  cabezas2018recurrence, johnson2023particle}. Intriguing critical behavior and rich combinatorial structure led to the study of systems with ballistic rather than diffusive motion \cite{elskens1985annihilation, droz95}. The canonical such process, \emph{ballistic annihilation} (BA), is defined as follows. 
    For each integer $k$, we let $\b_k$ represent the $|k|$th particle to the right or left of the origin ($k>0$ for right and $k<0$ for left) whose initial location is denoted by $x_k \in \mathbb R$. We set $x_0 = 0$ and sample $x_k$ so that the the spacings $x_k - x_{k-1}$ are independently sampled from the same  continuous distribution supported on a subset of $(0,\infty)$. Each particle is independently assigned a velocity according to the same distribution. Particles move at their assigned velocities and mutually annihilate upon colliding. 
    

    The \emph{symmetric three-velocity setting} has received the most attention. Velocity 0 particles, which we will refer to as \emph{blockades}, occur with probability $p$. Velocity $+1$ and $-1$ particles, which we will call \emph{right} and \emph{left arrows}, respectively, each occur with probability $(1-p)/2$. We define the critical initial density of blockades
    \begin{align}
    p_c := \sup\{ p \colon \P_p(\text{no blockades survive}) =1 \}.\label{eq:pcd}
    \end{align}
%
    Most interest has revolved around computing $p_c$ and describing the phase-behavior at and away from criticality. Droz et.\ al and later Krapivsky et.\ al \cite{droz95, krapivsky95} deduced that $p_c=1/4$. Despite some intial progress \cite{ST, bullets, burdinski}, this equality was not rigorously established until a breakthrough from Haslegrave, Sidoravicius, and Tournier \cite{HST}. 

    Inspired by the physics literature \cite{blythe2000stochastic} and a desire to further explore the method from \cite{HST}, Benitez, Junge, Lyu, Redman, and Reeves introduced a (symmetric) coalescing variant in which collisions sometimes generate new particles \cite{benitez2023three}.  We generically denote the three particle types with 0,+1, and $-1$ velocities as $\B, \R,$ and $\L$, respectively.
The notation $[\b \meet \b \ras \Theta, \quad x]$ encodes a reaction rule; upon colliding, the particle $\Theta \in \{\B, \L, \R, \emptyset\}$ is all that remains with probability $x$. Fix parameters $0 \le a, b, \alpha, \beta < 1$ with $a + b \le 1$, $\alpha+ \beta \le 1$. \emph{Four-parameter coalescing ballistic annihilation} (FCBA) has the following reactions:
	\begin{align}
		\R \meet \L \ras 
		\begin{cases} 
			\L, & a/2 \\ 
			\R , & a/2 \\ 
			\B, & b \\ 
			\emptyset , & 1- (a+b)
		\end{cases}, \qquad 
  \B \meet \L \ras 
			\begin{cases} 
				\L , & \alpha \\ 
				\B, & \beta \\ 
				\emptyset , & 1-(\alpha + \beta)
			\end{cases}, \qquad 
   \R \meet \B \ras 
			\begin{cases} 
				\R , & \alpha \\ 
				\B, & \beta \\ 
				\emptyset , & 1-(\alpha + \beta)
			\end{cases}.
	\end{align}
See Figure~\ref{fig:pc} for a visualization.

When the velocity of a newly generated particle matches the velocity of one of the reactants, it is convenient to view it as a continuation of the previous particle. For example, we will view a blockade $\B_k$ that has undergone multiple $\R \meet \B_k \implies \B$ reactions as the same blockade surviving collisions. With this perspective we continue to define $p_c$ as at \eqref{eq:pcd}; ``no blockades survive" means that each blockade initially present in the system will be annihilated after an almost surely finite time.

\begin{figure}
\includegraphics[width =  .9\textwidth]{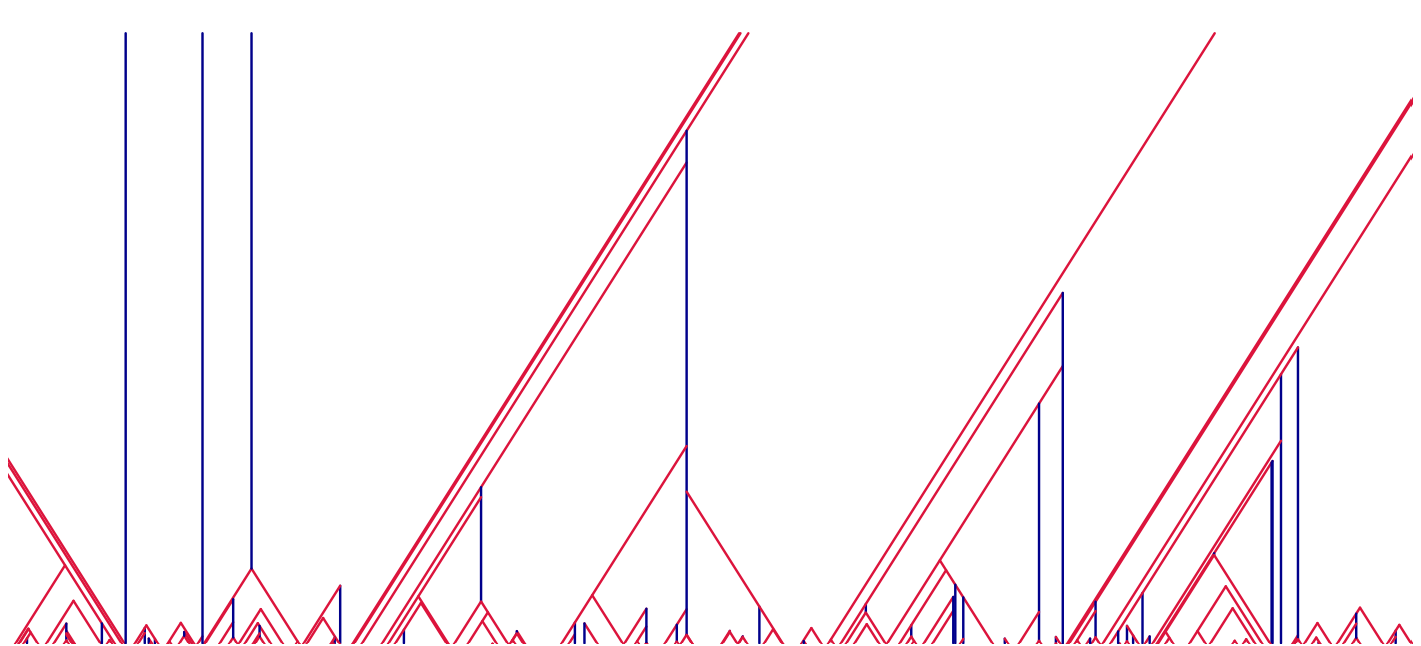}
\caption{A simulation of FCBA with 200 particles. The four reaction parameters are set to $1/3$ and $p=0.15$. \thref{thm:main} tells us that $p_c = 2/13 \approx 0.1538$. Space is the horizontal axis and time is the vertical axis. Lines show particle trajectories. Blue represents blockades and red arrows. The code used to generate this figure can be found at: \href{https://github.com/algerbrex/ballistic\_annihilation}{https://github.com/algerbrex/ballistic\_annihilation}.
} \label{fig:pc}
\end{figure}

The value of $p_c$ was computed in \cite{benitez2023three} for three-parameter systems with $\beta=0$. Junge, Ortiz, Reeves, and Rivera computed $p_c$ for $\beta \geq 0$, but the other parameters were set to zero \cite{junge2023non}. Our main theorem is a formula for $p_c$ that allows for the full four-parameter space. Its complexity illustrates the subtle interplay between the parameters in promoting and impeding blockade survival.  

 	\begin{theorem} \thlabel{thm:main}
		For any FCBA it holds that
		\begin{equation} \label{eq:pc}
			p_c  = \frac{(1-\beta)^2 - b (1-\alpha)}{4-(a+b)(1-\alpha) - \beta (3-\alpha- \beta) - 3\alpha}.
		\end{equation}
	\end{theorem}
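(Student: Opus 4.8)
The plan is to follow the renewal/recursion strategy of Haslegrave, Sidoravicius, and Tournier \cite{HST}, in the form developed for coalescing systems in \cite{benitez2023three, junge2023non}, with the four reaction outcomes producing a larger (but still finite) system of algebraic relations. The first step is to reduce the computation of $p_c$ to a one-sided survival probability. By stationarity of the particle configuration, $\P_p(\text{no blockades survive}) = 1$ if and only if the blockade $\B_0$ at the origin survives with probability $0$. The configurations on $(0,\infty)$ and $(-\infty,0)$ evolve independently until $\B_0$ is destroyed, and $\B_0$ survives against its right side exactly when every left arrow that reaches it triggers the absorbing reaction $\B\meet\L\ras\B$ --- which happens with probability $\beta$, independently each time; moreover, under repeated absorption the left arrows that reach $\B_0$ are precisely those that reach the origin in a fresh FCBA configuration on $(0,\infty)$ with no blockade placed there. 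Hence
\[
\P_p(\B_0 \text{ survives}) \;=\; \bigl(\E_p[\beta^{N}]\bigr)^2 ,
\]
where $N \in \{0,1,2,\dots,\infty\}$ is the number of left arrows that reach the origin in such a configuration (with the convention $\beta^{\infty}=0$), and so $p_c = \sup\{p : \E_p[\beta^{N}]=0\}$. For $\beta>0$ this demands $N=\infty$ almost surely, while for $\beta=0$ it demands $\P_p(N\ge 1)=1$; a regeneration argument should show these two conditions cut out the same set of $p$.

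The second step is to derive a closed recursion for the law of $N$. I would condition on the type of the first particle $\b_1$ to the right of the origin, and, when $\b_1$ is a right arrow or a blockade, on the outcome of its first collision with the (recursively fresh) configuration behind it. This expresses the generating function $\E_p[s^{N}]$ --- equivalently the survival probability $q(p):=\P_p(\B_0\text{ survives})$ --- in terms of finitely many auxiliary quantities: the probabilities that a right arrow entering a fresh half-line environment survives to $+\infty$, is annihilated, turns into a blockade, or emerges as a left arrow heading back out; and the probability that a blockade is destroyed from a given side, together with the type of whatever emerges past it. Each reaction branch contributes a term weighted by $a/2,\ a/2,\ b,\ 1-a-b$ for an $\R\meet\L$ collision, or by $\alpha,\ \beta,\ 1-\alpha-\beta$ for a blockade collision, and I would check that the relations close into a finite system of polynomial equations whose coefficients are affine in $p$, since $\P(\b_1=\B)=p$ and $\P(\b_1=\R)=\P(\b_1=\L)=(1-p)/2$.

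The third step is to extract the critical point. The system always admits a degenerate solution, the one corresponding to $q=0$ (i.e.\ $N=\infty$ a.s.); a nondegenerate solution with $q>0$ appears precisely when the derivative (Jacobian spectral radius) of the governing fixed-point map at the degenerate solution passes through $1$ --- the usual subcritical/supercritical threshold for such recursions. Because the relevant linearized quantity is affine in $p$, this borderline equation is linear in $p$, and solving it yields the displayed rational expression for $p_c$. One then has to argue that below this threshold the degenerate solution is the only one, so $q(p)=0$ there, and that above it the process actually realizes the nondegenerate solution, $q(p)>0$ --- the latter via a comparison or monotone-coupling argument --- which together with the $\beta=0$ reduction identifies the threshold with $p_c$ as defined in \eqref{eq:pcd}.

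The hard part will be the collision bookkeeping of the second step: making sure every way the four reaction outcomes can cascade is accounted for exactly --- in particular a freshly created right or left arrow that restarts a chain of collisions, and a freshly created blockade that becomes a new barrier --- and verifying that the resulting relations genuinely close into a solvable finite system. A secondary obstacle is the regeneration argument that equates the $N=\infty$ threshold with the $N\ge 1$ threshold when $\beta=0$, together with the coupling needed to confirm that blockades really do survive with positive probability above $p_c$ rather than merely that a formal nondegenerate fixed point exists.
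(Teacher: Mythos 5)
Your overall strategy sits in the same family as the paper's (condition on the type of $\b_1$, reduce to a one-sided visiting probability, extract $p_c$ as the point where the degenerate root ceases to be the operative one), and your first step is essentially the paper's renewal property: with $q := \P((0\la\b)_{(0,\infty)})$ one has $\P(\dil{i}{0}<\infty)=q^i$, so $\E[\beta^N]=(1-q)/(1-\beta q)$ vanishes exactly when $q=1$ for every $\beta\in[0,1)$; this also disposes of your worry about reconciling the $\beta=0$ and $\beta>0$ criteria, since both reduce to $q=1$.

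The genuine gap is in your second step, where you assert that conditioning on the type of $\b_1$ and on ``the outcome of its first collision'' closes into a finite system of polynomial relations among one-sided marginal quantities. It does not. The problematic terms are the paper's $s$ and $r$: the probability that $\R_1$ is eventually destroyed by a blockade (original $\B$ or generated $\BB$) \emph{and} that the origin is, respectively is not, visited afterwards. This joint event couples the environments on the two sides of that blockade through a time-ordering condition — the blockade must absorb or outlast certain arrivals from each side in a prescribed interleaving before the decisive collision — and no list of one-sided probabilities (arrow survives / is annihilated / converts) determines it. The paper handles this with the distance-comparison device of \cite{HST}: the arrival distances $\vec D_i$ and $\cev D_j$ on the two sides are independent with $\P(\vec D_i<\infty)=q^i$, giving $\P(\vec D_1<\cev D_1<\infty)=q^2/2$ by exchangeability; and because $\beta>0$ lets the blockade survive arbitrarily many weak collisions, one must further evaluate the weighted triple comparison $\sum_{i,j}\beta^{i+j}\P(\cev D_{j-1}<\vec D_i<\cev D_j<\infty)$ (\thref{lem:cap-s}, \thref{lem:triple-dist}), with the bookkeeping organized by a mass transport identity (\thref{prop:sr}). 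This is the paper's main technical content and exactly what your outline would need to supply; without it the ``finite closed system'' does not exist. Your third step also diverges in a smaller way: the paper runs no Jacobian/branching criterion but instead shows, following \cite{junge2023non}, that $q$ is continuous, equals $1$ up to $p_c$, and is strictly decreasing afterwards, so that $p_c$ is the unique root of the equation obtained by substituting $q=1$ into the recursion, which is linear in $p$.
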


As in \cite{junge2023non}, we can implicitly describe the probability the origin is visited by a particle. Let $(0 \la \b)_{(0,\infty)}$ denote the event that the origin is visited by a left arrow in FCBA restricted to only the particles started in $(0,\infty)$. Suppressing the dependence on the four parameters, let $q=q(p):= \P(  (0 \la \b)_{(0,\infty)})$. 

\begin{theorem}\thlabel{thm:q} For any FCBA and $0\leq p \leq 1$, $q$ solves the equation in \thref{prop:qr}. Moreover, $q$ is continuous with $q(p) =1$ for $0\leq p \leq p_c$, and $q$ is strictly decreasing for $p_c \leq p \leq 1$.
\end{theorem}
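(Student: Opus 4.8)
The plan is to first establish the fixed‑point recursion that $q$ satisfies, then solve that recursion for $p$ as an explicit decreasing function of $q$, and finally use the definition of $p_c$ together with a blockade‑survival dichotomy to identify which root of the recursion equals $q(p)$ for each $p$.

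\textbf{Step 1: the recursion of \thref{prop:qr}.} Restrict FCBA to the particles started in $(0,\infty)$ and run a first‑step analysis on $\b_1$, which is a blockade with probability $p$, a right arrow with probability $(1-p)/2$, and a left arrow with probability $(1-p)/2$. A left arrow immediately visits the origin. A right arrow is fired into the colony $(x_1,\infty)$, which up to the first collision involving $\b_1$ is an independent copy of the full half‑line process, and one records which particle type (if any) eventually emerges moving left past $x_1$; a blockade at $x_1$ likewise faces a fresh colony, now filtered through the $\B\meet\L$ reaction. Self‑similarity turns these descriptions into a closed polynomial system in $p$, the four parameters, $q$, and a bounded number of auxiliary emission probabilities; eliminating the auxiliaries yields the equation of \thref{prop:qr}. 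I expect this equation to take fixed‑point form $q=\mathcal F(q)$ with $\mathcal F$ built from $p,a,b,\alpha,\beta$, and in particular $\mathcal F(1)=1$: if every colony always sends a left arrow leftward then the origin is surely visited, so $q=1$ always solves the recursion.

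\textbf{Step 2: solving for $p$.} Since $q=1$ is always a root we may factor $q-\mathcal F(q)=(q-1)R(q)$, and the remaining roots satisfy $R(q)=0$. The content of the explicit computation is that $R(q)=0$ can be rearranged to express $p$ as an explicit algebraic function $p=g(q)$. I would then verify, by elementary calculus using only $0\le a,b,\alpha,\beta<1$, $a+b\le1$, $\alpha+\beta\le1$, that $g$ is continuous and strictly decreasing on $[0,1]$ with $g(0)=1$ and $g(1)$ equal to the right‑hand side of \eqref{eq:pc}, the latter being exactly \thref{thm:main} and serving as a consistency check. Thus $g$ is a decreasing bijection $[0,1]\to[p_c,1]$, and for each $p$ the roots of $q=\mathcal F(q)$ in $[0,1]$ are: $q=1$ always, together with the single additional root $g^{-1}(p)$ precisely when $p\in[p_c,1]$ (this coincides with $1$ at $p=p_c$), and no additional root when $p<p_c$.

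\textbf{Step 3: identifying the branch.} For $p<p_c$ the only root in $[0,1]$ is $q=1$, so $q(p)=1$ there. For $p\ge p_c$ it remains to show $q(p)=g^{-1}(p)$, i.e.\ that $q(p)<1$ once $p>p_c$. For this I use only the easy implication ``$q(p)=1\Rightarrow\P_p(\text{no blockades survive})=1$'': if the origin is almost surely visited from the right by a left arrow then, by self‑similarity applied at each blockade site (and the mirror statement on the left), any blockade that has not yet been destroyed is struck again almost surely, and since each strike leaves it intact with probability at most $\beta<1$ through the $\B\meet\L$ and $\R\meet\B$ rules, it is destroyed almost surely. Contrapositively, $p>p_c=\sup\{p:\P_p(\text{no blockades survive})=1\}$ forces $\P_p(\text{no blockades survive})<1$, hence $q(p)<1$ and therefore $q(p)=g^{-1}(p)$. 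Consequently $q=g^{-1}$ on $[p_c,1]$: strictly decreasing, with $q(1)=g^{-1}(1)=0$ and $q(p)\to g^{-1}(p_c)=1=q(p_c)$ as $p\downarrow p_c$, so $q$ is continuous on all of $[0,1]$ and strictly decreasing on $[p_c,1]$.

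\textbf{Main obstacle.} The crux is the algebra of Steps 1--2: carrying out the elimination of the auxiliary emission probabilities and then showing the resulting relation genuinely solves for $p$ as a \emph{monotone} function $g(q)$ — the intricate shape of \eqref{eq:pc} signals a heavy computation, and a priori $R(q)=0$ could be of degree greater than one in $p$, forcing one to argue that only a single branch is probabilistically admissible. Some care is also needed to make the first‑step analysis rigorous in the coalescing setting (almost‑sure finiteness of the relevant collision times, and bookkeeping for chains of coalescences feeding a single particle), but this is routine. By contrast, the blockade‑survival dichotomy is used only in the easy direction and, pleasantly, monotonicity of $q$ in $p$ is inherited from monotonicity of $g$ rather than requiring any coupling.
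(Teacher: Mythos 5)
Your overall architecture — establish the implicit equation of \thref{prop:qr}, factor out the trivial root $v=1$, observe that the residual equation is linear in $p$ and defines a strictly decreasing branch, then identify which root $q(p)$ follows — is exactly the framework the paper imports from its references, and your observation that $g(u,v)$ is linear in $u$ is correct. But there is a genuine gap in Step 3, and it sits precisely where the paper does real work. You derive continuity of $q$ as a \emph{consequence} of the branch identification, while the branch identification itself presupposes that $p_c$ (defined probabilistically as a supremum over blockade-survival densities) coincides with the algebraic value $p^*:=g(1)$ where the nontrivial branch meets $v=1$. That identification is \thref{thm:main}, which you invoke as a ``consistency check'' but then use substantively when you declare $g$ a bijection onto $[p_c,1]$. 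Your tools only give one inequality: the easy implication $q=1\Rightarrow\P_p(\text{no blockades survive})=1$, combined with ``only root is $1$ for $p<p^*$,'' yields $p^*\le p_c$. For the reverse inequality $p_c\le p^*$ you would need to know that $q(p)<1$ for every $p>p^*$ (and then that $q<1$ forces blockade survival with positive probability — the \emph{hard} direction of the dichotomy, which you explicitly decline to use). Nothing in your argument rules out $q(p)=1$ for some $p\in(p^*,p_c]$: on that window both roots $1$ and $g^{-1}(p)$ are available, and if $q$ sat at $1$ there, $q$ would be discontinuous at $p_c$ and the theorem would fail. So the statement you are proving is, given the root structure, essentially equivalent to $p_c=p^*$, and you have not closed that loop.

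The paper closes it by proving continuity of $q$ \emph{independently and probabilistically}, via a superadditivity argument (Step 1 of \cite[Section 3]{junge2023non}, resting on \cite[Lemma 15]{benitez2023three}, which is already stated for $\beta>0$). With continuity in hand, a connectedness argument starting from $q(1)=0$ forces $q(p)=g^{-1}(p)<1$ for all $p>p^*$, which then yields both $p_c=p^*$ and the claimed behavior of $q$. Your proposal is missing this independent continuity input (or some substitute for it), and without it the logic is circular. Separately, your Step 1 understates the technical content of \thref{prop:qr}: the closed form requires the three-distance comparison \eqref{eq:asym} evaluated in \thref{lem:triple-dist} via the mass transport principle, which is the paper's main computational novelty; but since \thref{thm:q} may legitimately cite \thref{prop:qr}, that is a presentational rather than logical issue.
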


\subsection{Proof Overview} \label{sec:overview}
Though we invoke a lot of the framework from \cite{benitez2023three, junge2023non}, our result is not simply a stitching together of these two results.
There are three main novelties. First, the result generalizes the formula for $p_c$ to a broader family of reactions. Second, we employ an efficient framework to organize and attack the somewhat involved calculations. Lastly, the proof resolves an asymmetry noted in \cite[Remark 2.6]{benitez2023three} that was only partially addressed in \cite{junge2023non}. We isolate and explain that difficulty now. 

An important event is $A:= \{(0 \la \b)_{(0,\infty)} \wedge (\R_1 \lrl \B)\}$ i.e., the origin is visited in the process restricted to the particles in $(0,\infty)$ when $\R_1$ is a right arrow that mutually annihilates with a blockade. In non-coalescing ballistic annihilation, this event ``looks" like the figure below:
    \begin{center}
        \begin{tikzpicture}[scale = 1.4,
      >=stealth',
      pos=.8,
      photon/.style={decorate,decoration={snake,post length=1mm}}
    ]
        \draw[->] (-.5,-1.5) to node[below]{} (7,-1.5);
    
        \node[inner sep=0pt](a1) at (0,-1.2)
        {{{{$\R_1$}}}};
        
        \node[inner sep=0pt](a1) at (3,-1.2)
        {{{{$\B_k$}}}};
        \draw[->,color=black] (0,-.9) to [bend left = 15] (2.9, -0.9);
        \node[inner sep=0pt](a1) at (-.5,-1.5)
        {\tiny{$|$}};
        \node[inner sep=0pt](a1) at (-.5,-1.8)
        {0};
        \node[inner sep=0pt](a1) at (6.25,-1.2)
        {{{{$\L_n$}}}};
        \draw[->,color=black] (6,-.9) to [bend right = 15] (0, -0.7);
        \draw[|-|] (3,-2) -- (6.25,-2) node [midway,fill=white] {$\cev D_1$};
\draw[|-|] (0,-1.8) -- (3,-1.8) node [midway,fill=white] {$\vec D_1$};    
        \end{tikzpicture}
    \end{center}
Namely, $\R_1$ annihilates with some blockade, say $\B_k$, which is visited later by a left arrow $\L_n$. Since $\B_k$ and all particles in $(x_1,x_k)$ are destroyed, $\L_n$ will visit 0 resulting in the desired event. This requires that the distances $\vec D_1 = x_k - x_1$ and $\cev D_1 = x_n - x_k$ satisfy $\vec D_1 < \cev D_1 < \infty$. A major insight in \cite{HST} was recognizing this structure and noting that $\vec D_1$ and $\cev D_1$ are continuous independent and identically distributed random variables with $\P(\vec D_1 < \infty)=q$. This gives, $$\P( \vec D_1 < \cev D_1 < \infty) = \P(\vec D_1 < \cev D_1 \mid \vec D_1, \vec D_1 < \infty) q^2 = (1/2)q^2.$$ Adding in a factor of $p$ to account for $\B_k$ being a blockade yields $\P(A) = (1/2) p q^2$. 

When $\beta >0$, the event $A$ becomes more complicated because $A$ can occur when multiple arrows visit and are destroyed by $\B_k$ before $\R_1$ and $\L_n$ arrive. For example, in the figure below $\R_1$ is the $i=2$nd right arrow to arrive to $\B_k$ (the pink arrow was destroyed earlier by $\B_k$), and $\L_n$ is the $j=4$th left arrow to arrive (the three orange arrows are destroyed by $\B_k$ before $\R_1$ arrives). We then have $\R_1$ mutually annihilates with $\B_k$ followed by an arrival from $\L_n$ which proceeds to the origin:
     \begin{center}
    \begin{tikzpicture}[scale = 1.4,
      >=stealth',
      pos=.8,
      photon/.style={decorate,decoration={snake,post length=1mm}}
    ]
    \draw[->] (-.5,-1.5) to node[below]{} (7,-1.5);
    \node[inner sep=0pt](a1) at (0,-1.2)
    {{{{$\R_1$}}}};
    \draw[->,color=black] (0,-.9) to [bend left = 15] (2.7, -0.9);
    \node[inner sep=0pt](a1) at (1,-1.2)
    {{{\textcolor{magenta}{$\R$}}}};
    \draw[->,photon,color=black] (1.2,-1.25) to (2.8, -1.25);
    \node[inner sep=0pt](a1) at (3,-1.2)
    {{{{$\B_k$}}}};
        \node[inner sep=0pt](a1) at (4.2,-1.2)
    {{{\textcolor{orange}{$\L$}}}};
    \draw[->,photon,color=black] (4,-1.25) to (3.2, -1.25);
    \node[inner sep=0pt](a1) at (5.2,-1.2)
    {{{\textcolor{orange}{$\L$}}}};
    \draw[->,photon,color=black] (5,-1.25) to (4.3, -1.25);
    \node[inner sep=0pt](a1) at (5.8,-1.2)
    {{{\textcolor{orange}{$\L$}}}};
    \draw[->,photon,color=black] (5.7,-1.25) to (5.3, -1.25);
    \node[inner sep=0pt](a1) at (-.5,-1.5)
    {\tiny{$|$}};
    \node[inner sep=0pt](a1) at (-.5,-1.8)
    {0};
    \node[inner sep=0pt](a1) at (6.25,-1.2)
    {{{$\L_n$}}};
    \draw[->,color=black] (6,-.9) to [bend right = 15] (0, -0.7);
\draw[|-|] (3,-1.8) -- (5.7,-1.8) node [midway,fill=white] {$\cev D_3$};
\draw[|-|] (3,-2.3) -- (6.25,-2.3) node [midway,fill=white] {$\cev D_4$};
\draw[|-|] (0,-2) -- (3,-2) node [midway,fill=white] {$\vec D_2$};    
    \end{tikzpicture}
    \end{center}
    
  The relevant distance comparisons in the event depicted above are $\cev D_3 < \vec D_2 < \cev D_4<\infty$. In general, $A$ includes events like the above figure for all pairs  $i,j\geq 1$. We argue for this formally in \thref{prop:sr}, but the intuition from the formula for $\P(A)$ from \cite{HST} suggests that
  \begin{align}
  \P(A) =p(1-(\alpha + \beta))  \sum_{i,j\geq 1} \beta^{i+j-2} \P( \cev D_{j-1}<\vec D_i < \cev D_{j} < \infty ).\label{eq:asym}
  \end{align}
  
 In reference to a precursor to \eqref{eq:asym}, \cite[Remark 2.6]{benitez2023three} claimed that ``there is no obvious symmetry that allows us to compute the value." The authors of \cite{junge2023non} made partial progress, but only had to compare two, rather than three, distances and their sum contained complement events with the same coefficients.  We evaluate \eqref{eq:asym} in \thref{lem:triple-dist}. 
 A key step is the observation that
$$\P(\cev D_{j-1} <\vec  D_i <\cev  D_{j} < \infty) = \P(\cev D_{j-1} < \vec D_i < \infty \text { and } \cev D_j < \infty) - \P(\cev D_j < \vec D_i < \infty).$$
This lets us rewrite \eqref{eq:asym} in terms of two infinite sums involving comparisons of two distances. After making this substitution into \eqref{eq:asym}, there is still an issue with mismatched exponents in the coefficients. However, the two sums share a common term $\sum_{i,j\geq 1} \beta^{i+j} \P(\vec D_i < \cev D_j< \infty )$ that can be factored out and solved for explicitly (\thref{lem:cap-s}). 

We now provide a sketch for the broader argument and outline of the document organization. In Section~\ref{sec:recursion}, we obtain a recursive expression involving $q$ by partitioning on the velocity of $\b_1$. Deferring the main calculations for later, we use this recursion and the framework from \cite{junge2023non} to prove our main results. 
In Section~\ref{sec:calculations} we derive a formula for a more general version of \eqref{eq:asym} and other important quantities. Having four parameters makes the calculations complex. An invaluable tool for organizing and computing the relevant quantities is a \emph{mass transport principle} at \eqref{eq:mtp}, developed in \cite{junge2022phase}, that uses translation invariance to separate the relevant collisions into independent one-sided events.

\subsection{Further Questions}
We focus on the three-velocity setting. Two natural directions for the study of coalescing systems are to extend to new reaction types, such as $\R \meet \B \implies \L$, or to find $p_c$ for an asymmetric system. The other natural reactions to include do not seem amenable to the recursion technique from \cite{HST}. As for asymmetric reactions, it was demonstrated in \cite{junge2023non} that asymmetric ballistic annihilation systems are significantly more difficult to analyze. For example, finding the value of $p_c$ remains open for \emph{any} asymmetric three-velocity system. 

One direction where perhaps some progress could be made is proving a universality result for the index of the first arriving particle as in \cite{HST, padro2022arrivals} for FCBA. However, success is not assured. The calculations in \cite{padro2022arrivals} involved finitely many particles. Fixing the number of particles poses a serious difficulty when making various distance comparisons, because it introduces an extra constraint to already delicate calculations. 

A straightforward generalization of this work is to introduce two extra parameters dictating reaction probabilities for blockades $\hat \b$ not initially present in the system. For example, we could specify $[\R \meet \hat \b \implies \R, \quad \psi]$ and $[\R \meet \hat \b \implies \hat \b, \quad \phi]$ for parameters  $\psi$ and $\phi$ not necessarily equal to $\alpha$ and $\beta$, respectively. Even more complexity could be introduced if we allowed arrow-blockade collisions to generate multiple types of blockades each with their own reaction probabilities. We worked out the formula for $p_c$ in this six-parameter setting and encountered no major surprises or difficulties. However, the calculations are dense even with four parameters, so we opted for simpler presentation. 
 
\subsection{Notation}
	 Let $c = 1- (a+ b)$ and $\xi = 1 - (\alpha + \beta)$.  
  The recursion argument in the next section involves careful partitioning. We now define and notate a variety of collision and visit events. In what follows, the reflection of the events are defined similarly when applicable. It is convenient to denote the events for different velocity assignments as:
\begin{align*}
    \B_k &= \{\b_k \text{ is a blockade}\}, \quad   
    \R_k = \{\b_k \text{ is a right arrow}\},\quad  \text{ and }
    \L_k = \{\b_k \text{ is a left arrow}\}.
\end{align*}
We now define the various collision events. We denote blockades that came into existence after time $0$ by $\hat \b$ generically and $\hat \b_{j,k}$ if the new blockade was formed by $\R_j \meet \L_k$.
	\begin{align*}
		(\R_j \lrl \L_k) &:= \{\text{$\R_j$ and $\L_k$ mutually annihilate}\} \\
		(\R_j \ra \L_k) &:= \{\text{$\L_k$ is annihilated by $\R_j$, $\R_j$ survives}\} \\
		(\R_j \harr \L_k) &:= \{\text{$\R_j$ and $\L_k$ mutually annihilate and generate a blockade $\BB_{j,k}$} \}\\
		(\B_j \lrl \L_k) &:= \{\text{$\B_j$ and $\L_k$ mutually annihilate}\} \\
		(\B_j \las \L_k) &:= \{\text{$\B_j$ is annihilated by $\L_k$, $\L_k$ survives}\} \\
		(\B_j \law \L_k) &:= \{\text{$\L_k$ is annihilated and $\B_j$ survives}\} \\
		(\B_j \ra \L_k) &:= (\B_j \lrl \L_k) \cup (\B_j \law \L_k).
	\end{align*}
	We call events with `$\law$' \emph{weak} collisions and events with `$\las$' \emph{strong} collisions. 
	When $\beta>0$, blockades can survive multiple weak collisions. There are several events involving blockades:
	\begin{align*}
		(\B_j \overset{i}{\lrl} \L_k) &:= \{\text{$\B_j$ is annihilated by a mutual annihilation with $\L_k$} \\
		&\qquad \qquad  \text{after $i-1$ weak collisions from the right}\} \\
		(\B_j \overset{i}{\lrl} \L) &:= \cup_{k>j}\ (\B_j \overset{i}{\lrl} \L_k) \\
		(\B_j \overset{i}{\las} \L_k) &:= \{\text{$\B_j$ is annihilated by a strong collision with $\L_k$} \\
		&\qquad  \qquad  \text{after $i-1$ weak collisions from the right}\} \\
		(\B_j \overset{i}{\las} \L) &:= \cup_{k>j}\ (\B_j \overset{i}{\las} \L_k) \\
		(\B_j \overset{i}{\law} \L_k) &:= \{\text{$\L_k$ is the $i$th to weakly collide with $\B_j$ from the right}\} \\
		(\B_j \overset{i}{\law} \L) &:= \{\text{$\B_j$ is visited by \emph{exactly} $i$ weak collisions from the right}\}.
	\end{align*}

      We use a subscript $I \subset \mathbb R$ on collision events to restrict to the system that only contains particles that started in $I$. If no subscript is present, the default is the one-sided process restricted to particles in $(0,\infty)$.
    
	Visits are defined as $(u \la \L_k) := \{\text{$u$ is visited by $\L_k$ from the right}\}.$ \emph{Mutual}, \emph{strong}, and \emph{weak} visits are defined as conditional events:
	\begin{align*}
		(u \overset{i}{\lrl} \L_k)_{(u, \infty)} &:= (\B_u \overset{i}{\lrl} \L_k)_{[u, \infty)} \mid \B_u \\
		(u \overset{i}{\las} \L_k)_{(u, \infty)} &:= (\B_u \overset{i}{\las} \L_k)_{[u, \infty)} \mid \B_u \\
		(u \overset{i}{\law} \L_k)_{(u, \infty)} &:= (\B_u \overset{i}{\law} \L_k)_{[u, \infty)} \mid \B_u.
	\end{align*}
	In an abuse of notation, we use $\B_u$ here to denote a blockade at $u$. We consider an arrow colliding with a blockade as also visiting the site of the blockade, e.g. $\{\B_j \meet \L_k\} \subset \{x_j \la \L_k\}$.


	

	\section{Recursion and Proofs of Main Results} \label{sec:recursion}
 The main work of this article is deriving a recursion for $q:= \P( (0 \la \b)_{(0,\infty)})$. We give the details now, while deferring some of the calculations to the next section. 
  \begin{proposition} \thlabel{prop:qr}
  It holds for any $0\leq p\leq 1$ that $0 = g(p,q(p))$ with
\begin{align}
g(u,v):= -q+ \f{f_1(u,v)+ f_2(u,v)}{f_3(u,v)}
\label{eq:g}
\end{align}
for the functions (introduced so that $g$ can fit on one line):
\begin{align}
f_1(u,v) &:= u v \left(-v \left(-\alpha  a+a+\alpha +\beta  (\alpha +\beta ) \left(b v^2-1\right)-b \beta  v (v+2)+b+\beta -1\right)-2 (\alpha +\beta -1)\right)\\
f_2(u,v) &:= (\beta  v-1)^2 (v (a+b (v-1) v-2)-1)+u\\
f_3(u,v) &:= (\beta  v-1)^2 (a+b (v-2) v-2).
\end{align}    
  \end{proposition}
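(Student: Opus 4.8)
The plan is to partition the event $(0\la\b)_{(0,\infty)}$ on the velocity of the first particle $\b_1$ in the one-sided process, and to express the resulting conditional probabilities through $q$ and the four parameters by invoking translation invariance together with the collision computations deferred to Section~\ref{sec:calculations}. Concretely, I would write
\[
q = \frac{1-p}{2}\,\P\big((0\la\b)_{(0,\infty)}\mid\L_1\big) + \frac{1-p}{2}\,\P\big((0\la\b)_{(0,\infty)}\mid\R_1\big) + p\,\P\big((0\la\b)_{(0,\infty)}\mid\B_1\big).
\]
The first conditional probability is $1$: if $\b_1$ is a left arrow then the interval $(0,x_1)$ is empty and nothing ever enters it, so $\L_1$ reaches the origin almost surely.

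For the $\R_1$ term I would partition on $\R_1$'s first collision, which (since $\R_1$ moves right, blockades are stationary, and right arrows never catch it) is either with the first left arrow to emerge from the independent copy of the process living on $(x_1,\infty)$ or with the first initial blockade it reaches, and then on the outcome of that collision. In the $\R\meet\L$ case the outcomes $\L,\R,\B,\emptyset$ carry weights $a/2,a/2,b,c$, and in the $\R\meet\B$ case the outcomes $\R,\B,\emptyset$ carry weights $\alpha,\beta,\xi$. An outcome leaving a left arrow immediately yields $(0\la\b)_{(0,\infty)}$ (up to that arrow's clear path back, itself controlled by the Section~\ref{sec:calculations} quantities); an outcome $\B$ creates a new blockade $\BB$ and re-invokes the blockade analysis at a fresh independent site; $\R\meet\L\ras\R$ lets $\R_1$ continue, so the analysis recurses on itself; and a mutual annihilation destroys $\R_1$, after which success depends on an independent copy of the event further to the right. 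The delicate ingredient is $A=\{(0\la\b)_{(0,\infty)}\wedge(\R_1\lrl\B)\}$, whose probability is the triple-distance sum \eqref{eq:asym}; I would plug in its evaluation \thref{lem:triple-dist}, and reduce the remaining sub-events, via translation invariance and the mass transport principle \eqref{eq:mtp}, to products of factors of $q$ and the parameters.

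For the $\B_1$ term I would condition on the number $i$ of left arrows arriving at $x_1$ before $\B_1$ is destroyed: the first $i-1$ are weak collisions, each contributing a factor $\beta$ and leaving $\B_1$ in place, while the $i$th is either strong (factor $\alpha$), in which case the surviving left arrow proceeds to the origin and the event occurs, or a mutual annihilation (factor $\xi$), which removes $\B_1$ and passes the event to an independent copy of the process on its right. Summing the resulting geometric series in $i$ assembles exactly the aggregated one-sided visit probabilities of the events $(u\overset{i}{\lrl}\L)$, $(u\overset{i}{\las}\L)$, $(u\overset{i}{\law}\L)$ computed in Section~\ref{sec:calculations}, again expressible in $q$ and the parameters.

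Substituting the three contributions yields a single polynomial identity relating $q$ and $p$; clearing the common denominator and collecting terms into the three pieces in the statement gives $0=g(p,q)$, with $f_1,f_2,f_3$ defined precisely so that $g$ fits on one line. This final rearrangement is finite but lengthy and is best verified with a computer algebra system. I expect the main obstacle to be the case analysis for $\R_1$ and $\B_1$: one must correctly enumerate all sub-events spawned by the coalescing reactions --- in particular the new blockades from $\R\meet\L\ras\B$ and $\R\meet\B\ras\B$, which re-invoke the recursion --- and justify each decomposition into independent one-sided events. That justification is exactly where translation invariance, the mass transport principle \eqref{eq:mtp}, and the explicit evaluations of Section~\ref{sec:calculations} (including the factoring of \eqref{eq:asym} through the common sum $\sum_{i,j\ge1}\beta^{i+j}\P(\vec D_i<\cev D_j<\infty)$) carry the real weight.
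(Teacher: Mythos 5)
Your proposal follows essentially the same route as the paper: partition on the velocity of $\b_1$, compute the blockade term as a geometric series over weak collisions (giving $\frac{\alpha pq+\xi pq^2}{1-\beta q}$), handle the right-arrow term by recursing on its first collision and isolating the quantities $s$ and $r$, which are then evaluated via the mass transport principle, \thref{lem:cap-s}, and \thref{lem:triple-dist}, before the final algebraic consolidation. The only cosmetic difference is that the paper imports the structure of the $\R_1$ term directly from \cite[Lemma 2.3]{benitez2023three} rather than re-deriving the first-collision case analysis you sketch.
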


\begin{proof}
    Let $B = ( 0 \la \b)_{(0,\infty)}$ so that $q := \P(B)$. The starting point is the partition
 \begin{align}
 q = \P(B \wedge \L_1)  + \P(B \wedge \B_1) + \P(B \wedge \R_1).\label{eq:qr}
 \end{align}
 We will go step by step to derive a formula for each term.
 
 First off, $\P(B \wedge \L_1) = \P(\L_1) = (1-p)/2$, since if the first particle is left moving, $0$ will be visited. The second term is a bit more involved, but fairly straightforward. We simply need to account for the number of weak visits to $\B_1$ and how it is finally destroyed. It is proven in \thref{lem:main-rec} that 
 $$\P(B \wedge \B_1)
			= \frac{\alpha pq + \xi pq^2}{1 - \beta q}.$$
 
The last term is the most complicated. Define
	\begin{align} \label{eq:sr}
		s & := \P(B \wedge (\R_1 \la \B)) + \P((0\la \b) \wedge (\R_1 \la \BB)), \\
		r & := \P(B^c \wedge (\R_1 \la \B)) + \P((0 \not \la \b) \wedge (\R_1 \la \BB)).
	\end{align}
 \cite[Lemma 2.3 (2.4)]{benitez2023three} proved that 
 \begin{align}
     \P(B \wedge \R_1)
			&= \left(q + \frac{a/2}{c} + \frac{b}{c} \frac{\P((0 \la \b) \wedge \B_1)}{p}\right) \frac{\frac{1-p}{2} - s - r}{1 + \frac{a/2}{c} + \frac{b}{c}} +s.
 \end{align}
 The argument does not involve $\beta$ and is identical for our setting (see \thref{rem:c} for what to do if $c=0$).
 So, we have gone from \eqref{eq:qr} to the expanded recursion
 \begin{align}
 q = \f{1-p}{2}  + \frac{\alpha pq + \xi pq^2}{1 - \beta q}  +  \left(q + \frac{a/2}{c} + \frac{b}{c} \frac{\P((0 \la \b) \wedge \B_1)}{p}\right) \frac{\frac{1-p}{2} - s - r}{1 + \frac{a/2}{c} + \frac{b}{c}} + s.\label{eq:qr2}
 \end{align}
All that is missing are formulas for $s$ and $r$. Let $\hat{p} := \P((\R_1 \lrlb \L)_{[x_1, \infty)})$. We prove in \thref{prop:sr} that 
		\begin{align}\label{eq:sr1}
			s &= \frac{(p+\hat p)(1-\alpha)(1-\beta)q^2}{2 (1-\beta q)^2}
		\qquad \text{ and } \qquad 
			r = \frac{(p+\hat p) \alpha q(1-q)}{(1-\beta q)^2}. 
		\end{align}
  Substituting the values of $s$ and $r$ at \eqref{eq:sr1} into \eqref{eq:qr2} and subtracting $q$ from both sides gives the claimed recursion.
\end{proof}
  
\begin{proof}[Proof of \thref{thm:main} and \thref{thm:q}]
    The proofs are nearly identical to the analogous arguments given in \cite{junge2023non}. We first observe that continuity of $q$ can be proven following the argument in \cite[Section 3]{junge2023non} (which, itself, closely followed \cite[Section 3]{junge2022phase}). The only difference is the superadditivty property in ``Step 1" requires more generality. Fortunately, the proof of this Step 1 refers to \cite[Lemma 15]{benitez2023three}, which is stated and proven at the level of generality of FCBA ($\beta >0$). So, ``Step 1'', and all subsequent steps still hold.
    Next, \thref{prop:qr} ensures that $(p,q(p))$ solves $0=g(p,q(p))$ with $g$ defined at \eqref{eq:g}. The argument in \cite[Section 4]{junge2023non} involves analyzing a similar implicit equation. That section provides a general framework for proving that $p_c$ is the unique solution to the linear equation $g(u,1)=0$, and that $q$ transitions from solving the equation $1-v=0$ for $p \leq p_c$ to a strictly decreasing root of $g(u,v)/(1-v)$ for $p_c < p \leq 1$. We omit the details since it the argument is very similar.
\end{proof}



\section{Calculations} \label{sec:calculations}
We first compute the comparisons alluded to in Section~\ref{sec:overview}. Next, we give two useful tools that we subsequently use to derive the quantities invoked in the proof of \thref{prop:qr}. 

\subsection{Arrival comparisons} 

    Let us define $\dir{i}{u}$ and $\dil{i}{u}$ as the times the location $u$ is visited for the $i$th time in FCBA restricted to $(-\infty, u)$ and $(u, \infty)$, respectively. Since arrows move with speed $1$, $\dir{i}{u}$ and $\dil{i}{u}$ also represent distances. For convention, we set $\dir{0}{u} = \dil{0}{u} = 0$. By symmetry, $\dir{i}{u}$ and $\dil{i}{u}$ are i.i.d. By ergodicity, $\dir{i}{u_1}$ and $\dil{i}{u_2}$ are identically distributed whenever $u_1 \le u_2$, under which they are also independent.
	
	Visits to $u$ enjoy the renewal property, i.e., the $(i+1)$th particle to visit $u$ from the right is the first to visit the initial location of the $i$th particle to visit $u$ from the right. This, along with translation invariance, implies 
	\begin{align*}
		\dil{i}{u} = X_1 + \cdots + X_i,
	\end{align*}
	where each $X_k$ is i.i.d. and has the same distribution as $\dil{1}{u}$. For a careful demonstration of the renewal argument, see for example the proof of \thref{cl:cond-renewal}, where additional conditioning is present.
	
	\begin{lemma} \thlabel{lem:finite-dist}
		For any $u\in \mathbb R$ and $i \in \mathbb N$, $\P(\dil{i}{u}< \infty) = \P(\dir{i}{u}< \infty) = q^i$.
	\end{lemma}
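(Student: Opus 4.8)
The plan is to reduce the whole statement to the case $i=1$ and then feed that into the renewal decomposition recalled immediately before the statement.

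\emph{Step 1: the $i=1$ case.} For any $u\in\RR$, the location $u$ is visited from the right at least once exactly on the event $\{\dil{1}{u}<\infty\}$. Any particle that reaches $u$ having started in $(u,\infty)$ is a left arrow at the moment it arrives, so this event coincides with ``$u$ is visited by a left arrow'' in the process restricted to $(u,\infty)$. Translation invariance of the spacing and velocity laws then yields $\P(\dil{1}{u}<\infty)=\P(\dil{1}{0}<\infty)=\P((0\la\b)_{(0,\infty)})=q$.

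\emph{Step 2: the renewal decomposition.} By the renewal property recalled above, $\dil{i}{u}=X_1+\cdots+X_i$ where $X_1,\dots,X_i$ are i.i.d.\ with the law of $\dil{1}{u}$. Each $X_k$ takes values in $[0,\infty]$, so deterministically $\{X_1+\cdots+X_i<\infty\}=\bigcap_{k=1}^i\{X_k<\infty\}$; independence together with Step 1 then gives $\P(\dil{i}{u}<\infty)=\prod_{k=1}^i\P(X_k<\infty)=q^i$.

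\emph{Step 3: the reflected quantity.} The map $x\mapsto -x$ is a measure-preserving symmetry of FCBA that interchanges left and right arrows, and hence interchanges (after relabelling) the process restricted to $(u,\infty)$ with the one restricted to $(-\infty,u)$. It therefore identifies $\dir{i}{u}$ in law with $\dil{i}{u}$, so $\P(\dir{i}{u}<\infty)=q^i$ as well.

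The one genuinely delicate ingredient is the renewal decomposition in Step 2 together with the independence of the summands $X_k$; but this is exactly what is set up in the discussion preceding the statement (with the pointer to the proof of \thref{cl:cond-renewal} for a careful treatment), so in writing this up I would cite it rather than reprove it. Everything else is the bookkeeping of identifying the ``finite visit'' events with visit events and invoking translation invariance and reflection symmetry.
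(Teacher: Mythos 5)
Your proposal is correct and follows essentially the same route as the paper: the paper's proof is exactly the renewal decomposition $\dil{i}{u}=X_1+\cdots+X_i$ with i.i.d.\ summands distributed as $\dil{1}{u}$, giving $\P(\dil{i}{u}<\infty)=q^i$, with the reflected case handled by symmetry. Your Steps 1 and 3 simply make explicit the identifications ($\P(\dil{1}{u}<\infty)=q$ via translation invariance, and the reflection argument) that the paper leaves implicit, which is fine.
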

	
	\begin{proof}
		Let $\dil{i}{u} = X_1 + \cdots + X_i$ such that $X_1, \dots, X_i$ are i.i.d. random variables with the distribution of $\dil{1}{u}$. Then,
		\begin{align*}
			\P(\dil{i}{u} < \infty) = \left(\P(\dil{1}{u} < \infty)\right)^i = q^i.
		\end{align*}
		Similarly for $\P(\dir{i}{u}< \infty)$.
	\end{proof}
	
	For any $u$, define
	\begin{equation} \label{eq:cap-s-def}
		S := \sum_{i=1}^\infty \sum_{j=1}^\infty \beta^{i+j} \P\left(\dir{i}{u} < \dil{j}{u} < \infty\right).
	\end{equation}
	Note again that FCBA is translation invariant, so $S$ does not depend on $u$.
	
	\begin{lemma} \thlabel{lem:cap-s}
		$S = \frac12 \left(\frac{\beta q}{1 - \beta q}\right)^2$.
	\end{lemma}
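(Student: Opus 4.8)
The plan is to avoid evaluating the individual probabilities $\P(\dir{i}{u} < \dil{j}{u} < \infty)$ and instead exploit a complementarity that survives summation. First I would note that $\dir{i}{u}$ and $\dil{j}{u}$ are independent, being measurable with respect to the disjoint half-lines $(-\infty,u)$ and $(u,\infty)$; combined with \thref{lem:finite-dist} this gives $\P(\dir{i}{u} < \infty,\ \dil{j}{u} < \infty) = q^{i}q^{j} = q^{i+j}$. Conditioned on both being finite, $\dir{i}{u}$ and $\dil{j}{u}$ are independent continuous random variables (sums of finitely many i.i.d.\ copies of the continuous spacing-type distribution), so $\P(\dir{i}{u} = \dil{j}{u} \mid \dir{i}{u}<\infty,\ \dil{j}{u}<\infty) = 0$. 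Hence
\[
\P(\dir{i}{u} < \dil{j}{u} < \infty) + \P(\dil{j}{u} < \dir{i}{u} < \infty) = q^{i+j}.
\]
Multiplying by $\beta^{i+j}$, summing over $i,j \ge 1$ (all terms are nonnegative, so Tonelli justifies the interchange of sums), and using $\sum_{i\ge 1}(\beta q)^i = \beta q/(1-\beta q)$ (valid since $\beta q \le \beta < 1$), I obtain
\[
S + T = \Big(\frac{\beta q}{1-\beta q}\Big)^{2}, \qquad\text{where}\qquad T := \sum_{i,j\ge 1} \beta^{i+j}\,\P(\dil{j}{u} < \dir{i}{u} < \infty).
\]

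The second and final ingredient is that $T = S$. Reflecting the real line through $u$ exchanges the restriction of FCBA to $(u,\infty)$ with the restriction to $(-\infty,u)$, and this reflection is measure preserving because left and right arrows play symmetric roles in FCBA --- this is precisely the symmetry already invoked to conclude that $\dir{i}{u}$ and $\dil{i}{u}$ are i.i.d. Under the reflection the $m$th visit to $u$ from the right is carried to the $m$th visit to $u$ from the left, so the pair $(\dil{j}{u},\dir{i}{u})$ has the same joint law as $(\dir{j}{u},\dil{i}{u})$; thus $\P(\dil{j}{u} < \dir{i}{u} < \infty) = \P(\dir{j}{u} < \dil{i}{u} < \infty)$. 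Substituting this into the definition of $T$ and swapping the names of the summation indices $i \leftrightarrow j$ shows $T = S$. Together with the previous display this yields $2S = \big(\beta q/(1-\beta q)\big)^2$, which is the claim.

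I do not anticipate a genuine obstacle. The only two points calling for a line of care are the ``no ties'' step --- which reduces to the fact that a sum of finitely many independent continuous increments is continuous, applied after conditioning on finiteness --- and stating the reflection symmetry precisely enough that it carries $\dil{m}{u}$ to $\dir{m}{u}$ for each $m$; both follow directly from the renewal and translation-invariance structure set up just before the statement.
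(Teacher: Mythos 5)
Your proposal is correct and follows essentially the same route as the paper: both arguments pair $\P(\dir{i}{u} < \dil{j}{u} < \infty)$ with its index-swapped/reflected counterpart, use the no-ties complementarity and independence to reduce the pair to $q^{i+j}$, and sum the geometric series to get $2S = \left(\beta q/(1-\beta q)\right)^2$. The only difference is cosmetic ordering (the paper adds two index-swapped copies of $S$ first and then invokes the reflection symmetry, whereas you first form $S+T$ and then show $T=S$), so there is nothing to change.
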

	
	\begin{proof}
		We add two copies of $S$ with switched indices.
		\begin{align*}
			2S = \sum_{i=1}^\infty \sum_{j=1}^\infty \beta^{i+j} \left( \P(\dir{i}{u} < \dil{j}{u} < \infty) +  \P(\dir{j}{u} < \dil{i}{u} < \infty)\right).
		\end{align*}
		By symmetry, $\P(\dir{j}{u} < \dil{i}{u} < \infty) = \P(\dil{j}{u} < \dir{i}{u} < \infty)$. Then, 
		\begin{align*}
			\P(\dir{i}{u} < \dil{j}{u} < \infty) +  \P(\dil{j}{u} < \dir{i}{u} < \infty) = \P(\dir{i}{u} < \infty, \dil{j}{u} < \infty).
		\end{align*}
		By \thref{lem:finite-dist} and independence, the right-hand side is equal to $q^{i+j}$. Then, as claimed, 
		\begin{align*}
			2S = \sum_{i=1}^\infty \sum_{j=1}^\infty (\beta q)^{i+j} = \left(\frac{\beta q}{1-\beta q}\right)^2.
		\end{align*}
	\end{proof}

    \begin{lemma} \thlabel{lem:triple-dist}
        For any $i\geq 1$ and $u\in \mathbb R$,
        \begin{align*}
            \sum_{j=1}^\infty \beta^{i+j} \P(\dil{j-1}{u} < \dir{i}{u} < \dil{j}{u}< \infty) = \frac{(\beta q)^2}{2(1-\beta q)}.
        \end{align*}
    \end{lemma}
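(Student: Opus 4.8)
The plan is to follow the decomposition flagged in Section~\ref{sec:overview} and collapse everything onto the two‑distance quantity $S$ evaluated in \thref{lem:cap-s}. Fix $u$ throughout. The starting point is the set identity, valid for each $i,j\ge 1$,
\[
\P\!\left(\dil{j-1}{u}<\dir{i}{u}<\dil{j}{u}<\infty\right)=\P\!\left(\dil{j-1}{u}<\dir{i}{u}<\infty,\ \dil{j}{u}<\infty\right)-\P\!\left(\dil{j}{u}<\dir{i}{u}<\infty\right),
\]
which holds because on the event that $\dir{i}{u}$ and $\dil{j}{u}$ are both finite (hence a.s.\ distinct and comparable) either $\dir{i}{u}$ falls in the gap $(\dil{j-1}{u},\dil{j}{u})$ or $\dil{j}{u}<\dir{i}{u}$, and in the latter case $\dil{j-1}{u}<\dir{i}{u}$ is automatic. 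The first term on the right is where the renewal structure recorded before \thref{lem:finite-dist} does the work: writing $\dil{j}{u}=\dil{j-1}{u}+X_j$ with $X_j$ the $j$th i.i.d.\ left increment, the event $\{\dil{j-1}{u}<\dir{i}{u}<\infty\}$ is measurable with respect to $\dil{j-1}{u}$ and $\dir{i}{u}$ only, hence independent of $X_j$, while left and right distances are independent of one another. Since $\{\dil{j}{u}<\infty\}=\{\dil{j-1}{u}<\infty\}\cap\{X_j<\infty\}$ and $\dil{j-1}{u}<\infty$ already holds on the event in question, this factorizes as $q\,\P(\dil{j-1}{u}<\dir{i}{u}<\infty)$ by \thref{lem:finite-dist}.

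Writing $c_j:=\P(\dil{j}{u}<\dir{i}{u}<\infty)$ (with $c_0=q^i$, since $\dil{0}{u}=0$), the displayed identity says the summand is $q\,c_{j-1}-c_j$. For each fixed $i$ I would then evaluate the inner sum over $j$ by a telescoping reindex of the geometric series, $\sum_{j\ge1}\beta^j(q c_{j-1}-c_j)=(\beta q-1)\sum_{j\ge0}\beta^j c_j+q^i$, and eliminate the $c_j$ using the complementary two‑distance identity $c_j+\P(\dir{i}{u}<\dil{j}{u}<\infty)=q^{i+j}$ (both distances finite and a.s.\ distinct, by \thref{lem:finite-dist} and independence). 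This converts $\sum_{j\ge0}\beta^j c_j$ into $\tfrac{q^i}{1-\beta q}-\sum_{j\ge1}\beta^j\P(\dir{i}{u}<\dil{j}{u}<\infty)$; after the $q^i$ terms cancel, the fixed‑$i$ contribution becomes
\[
\beta^i\sum_{j\ge1}\beta^j\,\P\!\left(\dil{j-1}{u}<\dir{i}{u}<\dil{j}{u}<\infty\right)=\beta^i(1-\beta q)\sum_{j\ge1}\beta^j\,\P\!\left(\dir{i}{u}<\dil{j}{u}<\infty\right).
\]
Summing over $i\ge 1$ (all terms are nonnegative, so Tonelli justifies the interchange) assembles exactly the common factor $S=\sum_{i,j\ge1}\beta^{i+j}\P(\dir{i}{u}<\dil{j}{u}<\infty)$ singled out in the overview, giving $(1-\beta q)S$. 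Finally \thref{lem:cap-s} supplies $S=\tfrac12\left(\tfrac{\beta q}{1-\beta q}\right)^2$, whence $(1-\beta q)S=\tfrac{(\beta q)^2}{2(1-\beta q)}$, which is the asserted value.

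I expect the main obstacle to be the factorization step, namely making rigorous that passing from $\{\dil{j-1}{u}<\dir{i}{u}<\infty\}$ to the additional requirement $\{\dil{j}{u}<\infty\}$ contributes only the factor $q$. This rests squarely on the renewal independence of the new increment $X_j$ from the earlier left partial sum and from the entire right‑hand process, exactly the structure established before \thref{lem:finite-dist}. The remaining difficulty is purely bookkeeping: the two two‑distance sums produced by the decomposition carry the ``mismatched exponents'' noted in Section~\ref{sec:overview} (one involves $\dil{j-1}{u}$, the other $\dil{j}{u}$), and the telescoping reindex above is precisely what aligns them so that the single common term $S$ can be factored out and evaluated.
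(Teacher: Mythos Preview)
Your proof is correct and follows essentially the same route as the paper: the same set identity reducing the triple comparison to two two-distance terms, the same renewal factorization giving the extra factor of $q$, and the same collapse onto $S$ from \thref{lem:cap-s}. The only cosmetic difference is that where the paper invokes left--right symmetry $\P(\dil{j}{u}<\dir{i}{u}<\infty)=\P(\dir{j}{u}<\dil{i}{u}<\infty)$ to align indices with $S$, you instead use the complementary identity $c_j+\P(\dir{i}{u}<\dil{j}{u}<\infty)=q^{i+j}$, which makes the $q^i$ terms cancel directly rather than leaving a residual $j=0$ boundary sum to evaluate.
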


    \begin{proof}
        For any $u$, we observe that $\dil{j-1}{u} < \dil{j}{u}$ almost surely and the following equality holds
		\begin{align*}
			\left\{ \dil{j-1}{u} < \dir{i}{u} < \infty \right\} \cap \left\{ \dil{j}{u} < \infty \right\} = \left\{ \dil{j-1}{u} < \dir{i}{u} < \dil{j}{u} < \infty \right\} \sqcup \left\{\dil{j}{u} < \dir{i}{u} < \infty \right\}.
		\end{align*}
		Morever, $\dil{j}{u} = \dil{j-1}{u} + X$ where $X$ has the distribution of $\dil{1}{u'}$ for some $u'$. This along with independence allows us to write the probability of the event on the left-hand side as 
		\begin{align*}
			\P(\{\dil{j-1}{u} < \dir{i}{u} < \infty\} \cap \{\dil{j}{u} < \infty\}) = \P(\dil{j-1}{u} < \dir{i}{u} < \infty) \, \P(\dil{1}{u'} < \infty),
		\end{align*}
		where $\P(\dil{1}{u'} < \infty)= q$. By symmetry of the line, we have $\P(\dil{j-1}{u} < \dir{i}{u} < \infty) = \P(\dir{j-1}{u} < \dil{i}{u} < \infty)$ and $\P(\dil{j}{u} < \dir{i}{u} < \infty) = \P(\dir{j}{u} < \dil{i}{u} < \infty)$. Then, we have
		\begin{align} 
			&\sum_{i=1}^\infty \sum_{j=1}^\infty \beta^{i+j} \P(\dil{j-1}{x_1} < \dir{i}{x_1} < \dil{j}{x_1}< \infty) \label{eq:triple-dist-sum}\\
			&\qquad = q \sum_{i=1}^\infty \sum_{j=1}^\infty \beta^{i+j} \P(\dir{j-1}{u} < \dil{i}{u} < \infty) - \sum_{i=1}^\infty \sum_{j=1}^\infty \beta^{i+j} \P(\dir{j}{u} < \dil{i}{u} < \infty). \nonumber
		\end{align}
		Reindexing the first term on the right and we have
		\begin{align*}
			\eqref{eq:triple-dist-sum} = \beta q\sum_{i=1}^\infty \beta^i \P(\dir{0}{u} < \dil{i}{u}<\infty) + \beta q S - S.
		\end{align*}
		By \thref{lem:finite-dist}, 
		\begin{align*}
			\sum_{i=1}^\infty \beta^{i} \P(\dir{0}{u} < \dil{i}{u}<\infty) = \sum_{i=1}^\infty (\beta q)^{i} = \frac{\beta q}{1-\beta q}.
		\end{align*}
        Plugging in this and the formula for $S$ in \thref{lem:cap-s}, we have
        \begin{align*}
            \eqref{eq:triple-dist-sum} = \frac{(\beta q)^2}{2(1-\beta q)}.
        \end{align*}
    \end{proof}

\subsection{Two useful tools}
	\begin{proposition}
		Let $c = 1- (a+b)$ and $\xi = 1 - (\alpha + \beta)$. The following equations hold so long as the parameters in the denominators are nonzero:
		\begin{align}
			\frac{1}{a/2} \P((\R_1 \la \L)_{(0,\infty)}) 
			= \frac{1}{b} \P((\R_1 \lrlb \L)_{(0,\infty)}) 
			= \frac{1}{c} \P((\R_1 \lrl \L)_{(0,\infty)}) \label{eq:changeofmeasure-1}
			\\
			\P((u \overset{1}{\la} \L_k)_{(u,\infty)}) 
			= \frac{1}{\alpha} \P((u \overset{1}{\las} \L_k)_{(u,\infty)}) 
			= \frac{1}{\beta} \P((u \overset{1}{\law} \L_k)_{(u,\infty)})
			= \frac{1}{\xi} \P((u \overset{1}{\lrl} \L_k)_{(u, \infty)}) \label{eq:changeofmeasure-2}
		\end{align}
	\end{proposition}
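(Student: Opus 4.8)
The plan is to derive both displays from a single fact: in FCBA the outcome of each collision is an independent sample from the corresponding reaction distribution, independent of every particle velocity and of every other collision outcome. Consequently, if two events agree except for how one designated collision resolves, then their probabilities are proportional to the relevant reaction probabilities, with a common constant of proportionality. The second display \eqref{eq:changeofmeasure-2} is the direct instance, and I would do it first. Working in the process on $[u,\infty)$ with a blockade at $u$ and fixing $k$, let $E := (u\overset{1}{\la}\L_k)_{(u,\infty)}$ be the event that $\b_k$ is a left arrow and is the first particle to visit $u$ (equivalently, the first to collide with $\B_u$). Then $E$ is measurable with respect to the velocities in $[u,\infty)$ together with the outcomes of the collisions occurring strictly before $\L_k$ reaches $u$, and so is independent of the outcome of the collision $\B_u\meet\L_k$. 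That outcome is of type $\L$ (a strong collision), $\B$ (a weak collision), or $\emptyset$ (a mutual annihilation) with probabilities $\alpha$, $\beta$, $\xi$, and the events $(u\overset{1}{\las}\L_k)_{(u,\infty)}$, $(u\overset{1}{\law}\L_k)_{(u,\infty)}$, $(u\overset{1}{\lrl}\L_k)_{(u,\infty)}$ are exactly $E$ intersected with these three outcomes. Hence their probabilities equal $\alpha\,\P(E)$, $\beta\,\P(E)$, $\xi\,\P(E)$, and dividing by $\alpha$, $\beta$, $\xi$ gives \eqref{eq:changeofmeasure-2}; if one of these parameters vanishes the corresponding probability vanishes as well, and the identity is read among the surviving ratios.

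For the first display the only new feature is that $\R_1$ may survive several collisions --- each of the form $\R\meet\B\ras\R$ or $\R\meet\L\ras\R$ --- before it is destroyed or escapes to $+\infty$. I would track the \emph{journey} of $\R_1$ in the $(0,\infty)$ process: let $O_n$ be the outcome of its $n$th collision, and for $n\ge1$ let $F_n$ denote the event that $\R_1$ survives its first $n-1$ collisions (each with outcome $\R$) and then undergoes an $n$th collision, which is with a left arrow (for $n=1$, simply that its first collision is with a left arrow). Then $(\R_1\la\L)_{(0,\infty)}$ --- the event that $\R_1$ is destroyed in a collision with a left arrow that survives --- is the disjoint union over $n\ge1$ of the events $F_n\cap\{O_n\text{ of type }\L\}$, and likewise $(\R_1\lrlb\L)_{(0,\infty)}$ and $(\R_1\lrl\L)_{(0,\infty)}$ are the disjoint unions of the events $F_n\cap\{O_n\text{ of type }\B\}$ and $F_n\cap\{O_n\text{ of type }\emptyset\}$, respectively. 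As before, $F_n$ is determined by the velocities and by the outcomes of collisions resolved before $\R_1$'s $n$th collision, hence is independent of $O_n$, which is of type $\L,\R,\B,\emptyset$ with probabilities $a/2,a/2,b,c$. Writing $W:=\sum_{n\ge1}\P(F_n)$, this yields
\[
\P\big((\R_1\la\L)_{(0,\infty)}\big)=\frac{a}{2}\,W,\qquad \P\big((\R_1\lrlb\L)_{(0,\infty)}\big)=b\,W,\qquad \P\big((\R_1\lrl\L)_{(0,\infty)}\big)=c\,W,
\]
and dividing by $a/2$, $b$, $c$ gives \eqref{eq:changeofmeasure-1}. Convergence of $W$ is automatic, since $\P(F_n)$ is at most the probability that $\R_1$ survives its first $n-1$ collisions, which decays geometrically because each collision has outcome other than $\R$ with probability at least $1-\max(\alpha,a/2)>0$.

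The step I expect to demand the most care is the independence of $F_n$ from $O_n$ (and, for \eqref{eq:changeofmeasure-2}, of $E$ from the outcome of $\B_u\meet\L_k$): one must argue that whether $\R_1$ survives up to an $n$th collision, and whether that collision is with a left arrow, can be decided from the velocities together with the outcomes of the collisions that happen before it --- with no reference to $O_n$. This is precisely the renewal-type bookkeeping already used in \thref{lem:finite-dist} and carried out carefully in \thref{cl:cond-renewal}: a surviving right arrow plows through whichever alive particles lie ahead of it, and the configuration of those particles is governed by the velocities and earlier outcomes alone. Once that is in hand, both displays follow as above.
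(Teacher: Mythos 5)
Your proof is correct and takes the same approach as the paper, whose entire proof is the single sentence that these identities ``use the independence of the outcomes and their probabilities for each collision event.'' Your write-up simply makes explicit the decomposition (over which collision is the designated one) and the independence of that collision's outcome from the events determining it, which is exactly the content the paper leaves implicit.
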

	
	\begin{proof}
		These use the independence of the outcomes and their probabilities for each collision event.
	\end{proof}
 	\begin{remark} \thlabel{rem:c}
		As is in \cite[Remark 2.4]{benitez2023three}, if $c = 0$ and/or $\xi =0$, similar formulas as in \thref{lem:main-rec} could be derived using whichever parameters are nonzero.
	\end{remark}

	\begin{proposition}[Mass transport principle] 
		Consider a family of non-negative random variables Z(m,n) for integers $m,n \in \Z$ such that its distribution is diagonally invariant under translation, i.e., for any integer $\ell$, $Z(m+\ell,n+\ell)$ has the same distribution as $Z(m,n)$. Then for each $m \in \Z:$
		\begin{equation} \label{eq:mtp}
			\E \sum\limits_{n \in \Z} Z(m,n) = \E \sum\limits_{n \in \Z} Z(n,m).
		\end{equation}
	\end{proposition}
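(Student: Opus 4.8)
The plan is to reduce both sides of \eqref{eq:mtp} to one common quantity — the expected total mass carried by the ``diagonal slice'' $\{Z(k,0):k\in\Z\}$ — by first swapping the expectation with the infinite sum via Tonelli's theorem, and then applying the diagonal translation invariance together with a shift reindexing of $\Z$.

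First I would handle the interchange of expectation and summation. Since every $Z(m,n)$ is non-negative, Tonelli's theorem (equivalently, monotone convergence applied to the partial sums $\sum_{|n|\le N} Z(m,n)$) gives
\[
\E \sum_{n \in \Z} Z(m,n) = \sum_{n \in \Z} \E\, Z(m,n),
\qquad
\E \sum_{n \in \Z} Z(n,m) = \sum_{n \in \Z} \E\, Z(n,m),
\]
with all quantities interpreted in $[0,\infty]$; in particular the claimed identity is permitted to read $\infty=\infty$, so no integrability hypothesis is needed.

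Next I would collapse each of the two sums onto the diagonal. Taking $\ell=-n$ in the hypothesis yields $\E\, Z(m,n)=\E\, Z(m-n,0)$, so reindexing by the bijection $n\mapsto k=m-n$ of $\Z$ gives
\[
\sum_{n \in \Z} \E\, Z(m,n) = \sum_{n \in \Z} \E\, Z(m-n,0) = \sum_{k \in \Z} \E\, Z(k,0).
\]
Symmetrically, taking $\ell=-m$ yields $\E\, Z(n,m)=\E\, Z(n-m,0)$, and reindexing by $n\mapsto k=n-m$ gives
\[
\sum_{n \in \Z} \E\, Z(n,m) = \sum_{n \in \Z} \E\, Z(n-m,0) = \sum_{k \in \Z} \E\, Z(k,0).
\]
Since the two right-hand sides coincide, \eqref{eq:mtp} follows for the fixed but arbitrary $m$.

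There is essentially no serious obstacle: the only point requiring care is the swap of expectation with the infinite sum, which is precisely why non-negativity of $Z$ is assumed, together with the bookkeeping in the two reindexings (the shift bijections $n\mapsto m-n$ and $n\mapsto n-m$ of $\Z$). One could instead avoid Tonelli by first proving the identity for the truncated sums over $|n|\le N$ and letting $N\to\infty$, but this produces boundary terms that must be controlled, so the Tonelli route is cleaner and is the one I would present.
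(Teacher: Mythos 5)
Your proof is correct and follows essentially the same route as the paper: Tonelli to exchange expectation with the sum of non-negative terms, then diagonal translation invariance plus a reindexing of $\Z$. The only cosmetic difference is that you collapse both sides onto the common diagonal slice $\sum_{k}\E\,Z(k,0)$, whereas the paper shifts $\E\,Z(m,n)$ directly to $\E\,Z(2m-n,m)$ and reindexes once.
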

	\begin{proof}
		Using Fubini's theorem and translation invariance of $\E [Z(m,n)]$, we obtain
		\begin{align*}
			\E \sum\limits_{n \in \Z} Z(m,n) =  \sum\limits_{n \in \Z} \E [Z(m,n)] = \sum\limits_{n \in \Z} \E [Z(2m-n,m)]
			= \sum\limits_{n \in \Z} \E [Z(n,m)] = \E \sum\limits_{n \in \Z} Z(n,m).
		\end{align*}
	\end{proof}

\subsection{Recursion Quantities}
\begin{lemma} \thlabel{lem:main-rec}
		For FCBA restricted to $(0, \infty)$, it holds that
		\begin{align}
			\P((0 \la \b) \wedge \B_1)
			&= \frac{\alpha pq + \xi pq^2}{1 - \beta q} \label{eq:rec-2}.
		\end{align}
	\end{lemma}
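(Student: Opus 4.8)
The plan is to compute $\P((0 \la \b) \wedge \B_1)$ by partitioning on exactly how the blockade $\b_1$ is ultimately destroyed, since for the origin to be visited the blockade at $x_1$ must be annihilated and then a subsequent left arrow must continue to $0$. Conditioning on $\B_1$ (which costs a factor $p$), the blockade at $x_1$ absorbs some number $i-1 \ge 0$ of weak left-arrow collisions from the right, and is then destroyed either by a mutual annihilation $(\B_1 \overset{i}{\lrl} \L_k)$ or a strong collision $(\B_1 \overset{i}{\las} \L_k)$. In the mutual-annihilation case, both the blockade and the incoming left arrow vanish, so the origin is reached iff \emph{some later} left arrow visits $x_1$ in the process restricted to $(x_1,\infty)$ and that arrow then reaches $0$; in the strong-collision case, the surviving left arrow $\L_k$ itself reaches $0$. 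Each of the $i-1$ preliminary weak collisions, being independent outcomes, contributes a factor $\beta$ relative to a plain visit, so summing the geometric series in $i$ will produce the $1-\beta q$ denominator.

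First I would set up the decomposition
\begin{align*}
\P((0 \la \b) \wedge \B_1) = p \sum_{i \ge 1} \Big( \P\big( (0 \la \b) \wedge (x_1 \overset{i}{\lrl} \L)_{(x_1,\infty)} \,\big|\, \B_1 \big) + \P\big( (0 \la \b) \wedge (x_1 \overset{i}{\las} \L)_{(x_1,\infty)} \,\big|\, \B_1 \big) \Big),
\end{align*}
where I must be careful that after the blockade is annihilated the remaining relevant geometry (a fresh left-arrow arrival to $x_1$, then a visit to $0$) is independent of the earlier collisions by the renewal/translation-invariance structure already invoked for $\dil{i}{u}$. Using \eqref{eq:changeofmeasure-2}, the $i-1$ weak collisions followed by a mutual annihilation have probability $\beta^{i-1}\xi$ times that of a single plain visit, and followed by a strong collision have probability $\beta^{i-1}\alpha$ times a single plain visit. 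The single plain visit $(x_1 \la \L_k)$ occurs with probability $q$; in the strong case that same arrow proceeds to $0$ with conditional probability $q$ again (by the renewal property, the onward journey from $x_1$ to $0$ is an independent copy), giving $q^2$; in the mutual case one needs an independent \emph{further} left arrow to reach $x_1$ and then $0$, i.e.\ $q \cdot q = q^2$ as well. Hence each $i$-term contributes $\beta^{i-1}(\alpha q^2 + \xi q^2)$... but this is not yet matching \eqref{eq:rec-2}, so the bookkeeping must be done more carefully: in the strong case the visiting arrow to $x_1$ and its continuation to $0$ together contribute $q$-to-$0$-given-arrival which is $q$, but the event ``$x_1$ is visited'' should be weighted by $q$, whereas in the mutual case ``$x_1$ is visited, blockade dies, then a new arrow visits $x_1$, then reaches $0$'' contributes $q \cdot q^2$; tracking which factors of $q$ are genuinely present is the crux. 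The intended answer $\frac{\alpha p q + \xi p q^2}{1-\beta q}$ indicates the strong term carries a single $q$ and the mutual term carries $q^2$, so I would organize the argument around: (strong) blockade killed, surviving arrow is \emph{the} visitor of $0$, contributing $\alpha$ (for the kill) times $q$ (arrow reaches $0$); (mutual) blockade and arrow both die, contributing $\xi$ times $q$ (that a left arrow reached $x_1$ at all — needed to trigger the annihilation) times $q$ (an \emph{independent} subsequent left arrow reaches $0$), and the $i-1$ weak prefixes give $\sum_{i\ge1}\beta^{i-1} = \frac{1}{1-\beta q}$ once the $q$ counting the arrival of each weakly-colliding arrow is folded in.

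I would then sum the geometric series: writing the per-$i$ contribution as $\beta^{i-1} q^{i-1}$ (one $q$ per weakly-arriving arrow, each such arrival requiring a visit to the current location) times $(\alpha q + \xi q^2)$ for the terminal collision and its consequences, so
\begin{align*}
\P((0 \la \b) \wedge \B_1) = p\,(\alpha q + \xi q^2) \sum_{i \ge 1} (\beta q)^{i-1} = \frac{p(\alpha q + \xi q^2)}{1 - \beta q} = \frac{\alpha p q + \xi p q^2}{1 - \beta q},
\end{align*}
as claimed. The main obstacle I anticipate is rigorously justifying the independence and the exact power of $q$ attached to each stage — i.e.\ proving that conditioned on the first $i-1$ arrows weakly colliding and being absorbed, the arrival of the $i$th arrow, the fate of the blockade under that collision, and (in the mutual case) the onward progress to the origin factor as independent events with the stated probabilities. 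This is precisely the kind of renewal-plus-translation-invariance argument the excerpt defers to ``the proof of \thref{cl:cond-renewal},'' so I would model the bookkeeping on that conditional renewal computation, and verify that restricting to $(x_1,\infty)$ and then invoking the one-sided definition of $q$ is legitimate at each step.
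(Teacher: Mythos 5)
Your proposal is correct and follows essentially the same route as the paper: partition on whether $\B_1$ is destroyed by a strong or mutual collision, sum over the number $i-1$ of prior weak collisions using \eqref{eq:changeofmeasure-2} and the renewal property, and obtain $p(\alpha q + \xi q^2)\sum_{i\ge1}(\beta q)^{i-1}$. Your final bookkeeping (strong term $\alpha q$ since the surviving arrow reaches $0$ for free, mutual term $\xi q\cdot q$ requiring an independent subsequent visitor, weak prefixes contributing $(\beta q)^{i-1}$) is exactly the paper's computation.
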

	
	\begin{proof}
		We consider the two ways that $\B_1$ can be annihilated:
		\begin{align*}
			\P((0 \la \b) \wedge \B_1) = \P(\B_1 \las \L ) + \P((0 \la \b) \wedge (\B_1 \lrl \L)).
		\end{align*}
		For the first term, we break into cases by the number of weak collisions that occurred before $\B_1$ is destroyed from a strong collision. 
		\begin{align*}
			\P(\B_1 \las \L ) = \sum_{i=1}^\infty \P(\B_1 \wedge (x_1 \overset{i}{\las} \L)_{(x_1, \infty)}).
		\end{align*} 
		By \eqref{eq:changeofmeasure-2} and the renewal property we have
		\begin{align*}
			\P(\B_1 \las \L) = p \sum_{i=1}^\infty (\beta q)^{i-1} (\alpha q) = \frac{p\alpha q}{1-\beta q}.
		\end{align*} 
		For the second term, one again considers the number of weak collisions that occurred before $\B_1$ is mutually annihilated.
		\begin{align*}
			\P((0 \la \b) \wedge (\B_1 \lrl \L)) = \sum_{i=1}^\infty \P(\B_1 \wedge (x_1 \overset{i}{\lrl} \L)_{(x_1, x_1 + \dil{i}{x_1}]} \wedge (x_1 + \dil{i}{x_1} \la \L)_{(x_1 + \dil{i}{x_1}, \infty)}).
		\end{align*}
		Similarly, 
		\begin{align*}
			\P((0 \la \b) \wedge (\B_1 \lrl \L)) = p \sum_{i=1}^\infty (\beta q)^{i-1} (\xi q) q = \frac{p\xi q^2}{1-\beta q}.
		\end{align*}
		Combining both terms, we obtain the claimed formula.
	\end{proof}

	\begin{proposition} \thlabel{prop:sr} Let $s$ and $r$ be as defined at \eqref{eq:sr} and  $\hat{p} := \P((\R_1 \lrlb \L)_{[x_1, \infty)})$. It holds that
		\begin{align}
			s &= \frac{(p+\hat p)(1-\alpha)(1-\beta)q^2}{2 (1-\beta q)^2}
			\label{eq:s-final}\\
			r &= \frac{(p+\hat p) \alpha q(1-q)}{(1-\beta q)^2}. \label{eq:r-final}
		\end{align}
	\end{proposition}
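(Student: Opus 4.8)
The plan is to decompose the events defining $s$ and $r$ according to how $\R_1$ first arrives at a blockade (either an original blockade $\B$ or a newly generated one $\BB$), and then to condition on the subsequent evolution at that blockade site. By the change-of-measure relation \eqref{eq:changeofmeasure-1}, conditioning on $(\R_1 \la \B)$ versus $(\R_1 \la \BB)$ only changes the prefactor, and the combined weight is exactly $(p + \hat p)$ once we account for the probability $p$ that the blockade was original and the probability $\hat p$ that it was generated by an $\R \meet \L$ collision in $[x_1,\infty)$. So the first step is to show that $s$ and $r$ each factor as $(p+\hat p)$ times the conditional probability, given that $\R_1$ reaches a blockade at some site $v$, of the relevant one-sided event to the right of $v$ together with the event $(0 \la \b)$ or its complement to the left. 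Here I would lean on the fact that, once $\R_1$ reaches $v$, everything to the left of $v$ (in the original $(0,\infty)$ process) has been cleared out, so whether $0$ is ultimately visited is governed by an independent copy of the one-sided process, contributing a clean factor of $q$ or $1-q$.

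Next I would unwind the one-sided dynamics at the blockade site $v$. When $\R_1$ arrives it is the $i$th arrow to hit $v$ from the left for some $i \geq 1$; the previous $i-1$ collisions must all have been weak (blockade survives), each carrying weight $\beta$, and each requiring the next left-arrival distance to be finite, contributing a factor of $q$ per the renewal property and \thref{lem:finite-dist}. This is the source of the $\sum_{i\geq 1}(\beta q)^{i-1} = (1-\beta q)^{-1}$ factor, and similarly a $(1-\beta q)^{-1}$ from the left-arrows that hit $v$ before the decisive left arrow. For $s$, the decisive collision is the one in which $\R_1$ mutually annihilates with the blockade (weight $b$ in the original-blockade case absorbed into the bookkeeping, but at the level of the one-sided arrival event it is the $\xi$-type mutual outcome), after which a further left arrow must reach $v$ and then $0$; this is where \thref{lem:triple-dist} or the basic arrival comparisons give the $q^2$ and the factor $1/2$ from the symmetric ordering of the relevant distances, and $(1-\alpha)$ appears as the probability the blockade is not destroyed by a strong collision before the decisive mutual one. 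For $r$, the blockade is instead destroyed by a strong collision from a left arrow, contributing $\alpha$, and $0$ is not visited, contributing $(1-q)$; the absence of an extra $1/2$ reflects that no second distance comparison is needed. Assembling the geometric sums yields the stated $(1-\beta q)^{-2}$ denominators.

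The main obstacle I anticipate is the careful accounting of the "decisive collision" and making rigorous the claim that the various arrival distances — the one for $\R_1$'s arrival, the ones for the intervening left arrows, and the final left arrow that proceeds to $0$ — are independent with the distributions asserted, so that \thref{lem:triple-dist} applies verbatim. In particular one must verify that the event that $\R_1$ is the $i$th arrow from the left is independent of the configuration of left-arrivals at $v$, and that the mutual-annihilation outcome (probability $b$, resp.\ the generated-blockade bookkeeping via $\hat p$) can be cleanly separated out using \eqref{eq:changeofmeasure-1} and \eqref{eq:changeofmeasure-2}. This is exactly the "asymmetry" difficulty flagged in \cite[Remark 2.6]{benitez2023three}: the triple comparison $\dil{j-1}{} < \dir{i}{} < \dil{j}{} < \infty$ has no obvious symmetry, but \thref{lem:triple-dist} (via the decomposition into two double comparisons sharing the common sum $S$ of \thref{lem:cap-s}) resolves it, so once the reduction to that lemma is in place the remaining work is the routine summation of geometric series sketched above.
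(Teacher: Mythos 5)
Your proposal assembles several of the right ingredients (the $(p+\hat p)$ prefactor, the geometric sums over weak collisions, and the reduction of the distance comparisons to \thref{lem:cap-s} and \thref{lem:triple-dist}), but it has two genuine gaps. First, you never invoke the mass transport principle \eqref{eq:mtp}, which is the tool the paper uses to make your first reduction rigorous. The blockade that annihilates $\R_1$ sits at a random index $n$, and its identity is correlated with the configuration between $x_1$ and $x_n$; one cannot simply ``condition on $\R_1$ reaching a blockade at some site $v$'' and then treat the left- and right-arrival times at $v$ as the i.i.d.\ quantities $\dir{i}{v}$, $\dil{j}{v}$ of \thref{lem:finite-dist}. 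The paper instead writes $s_w, s_{mu}, r_w, r_{mu}$ as $\E\sum_{n>1}Z(1,n)$ for suitable indicator sums and uses diagonal translation invariance to convert this to $\E\sum_{m<1}Z(m,1)$, re-centering everything at the fixed location $x_1$; it is exactly this step that produces the clean factors $p$ (for the event $\B_1$) and $\hat p$ (from summing over the arrows $\R_k$, $k<1$, that could generate a blockade by colliding with $\L_1$) and legitimizes applying the arrival-comparison lemmas at a deterministic site. Some device of this kind is required; without it the factorization asserted in your first step is unjustified.

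Second, your case analysis is wrong in a way that would not reproduce the stated formulas. The event $(\R_1\la\B)$ contains both the mutual outcome $(\R_1\lrl\B)$ and the weak outcome $(\R_1\raw\B)$ in which $\R_1$ is absorbed and the blockade survives; the paper splits $s=s_w+s_{mu}$ and $r=r_w+r_{mu}$ accordingly, and the two pieces involve different comparisons (a double comparison via $S$ for the weak piece, the triple comparison of \thref{lem:triple-dist} for the mutual piece) and different powers of $(1-\beta q)$. Your sketch treats $s$ as consisting only of the mutual case. Worse, for $r$ you attribute the coefficient to ``the blockade being destroyed by a strong collision from a left arrow.'' By definition $(\B\las\L)$ means the left arrow survives the collision; since everything between the blockade and the origin has been cleared by that time, the surviving arrow proceeds to $0$, so strong collisions from the right contribute to $s$, not to $r$. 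The events actually contributing to $r$ are that the blockade is never destroyed from the right, or is mutually annihilated from the right with no subsequent visitor. The factors $(1-\alpha)(1-\beta)$ in $s$ and the coefficient in $r$ only emerge after algebraically combining the weak and mutual pieces; they do not admit the single-collision interpretations you give, and the heuristic bookkeeping you describe (including the reference to the weight $b$, which governs $\R\meet\L\ras\B$ and enters only through $\hat p$) would not assemble into the claimed expressions.
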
	
	
	In the derivations for both $s$ and $r$, we use the mass transport principle to write $s$ and $r$ in terms of the quantity $S$ and apply \thref{lem:cap-s,lem:triple-dist}.
	
	\begin{proof}[Proof of \eqref{eq:s-final}]
		Recall that 
		\begin{align*}
			s = \P((0\la \b) \wedge (\R_1 \la \B)) + \P((0\la \b) \wedge (\R_1 \la \BB)).
		\end{align*}
		Let $s = s_w + s_{mu}$ where
		\begin{align*}
			s_w &:= \P((0\la \b) \wedge (\R_1 \raw \B)) + \P((0\la \b) \wedge (\R_1 \raw \BB)) \\
			s_{mu} &:= \P((0\la \b) \wedge (\R_1 \lrl \B) + \P((0\la \b) \wedge (\R_1 \lrl \BB)). 
		\end{align*}
		
		For $s_w$ there are two cases, depending on how the blockade that annihilates $\R_1$ is annihilated. Either this bloackde is mutually anihilated from the right and another particle visits, or it is ``bulldozed'' from the right by a particle that visits the origin. In either cases, the annihilation of the blockade must occur after $\R_1$ weakly collides with that blockade, which introduces a comparison between distances. We define the following indicator:
		\begin{align*}
			Z_{w}(m,n) := \sum_{i=1}^\infty &\sum_{j=1}^\infty \ind{(\R_m \overset{i}{\raw} \B_n)_{[x_m, x_n]} \wedge (\B_n \overset{j}{\lrl} \L)_{[x_n, x_n + \dil{j}{x_n}]} \wedge (x_n + \dil{j}{x_n} \la \L)_{(x_n + \dil{j}{x_n},\infty)} \\
				& \qquad \qquad \wedge (\dir{i}{x_n} < \dil{j}{x_n} < \infty)} \\
				& \quad + \ind{(\R_m \overset{i}{\raw} \B_n)_{[x_m, x_n]} \wedge (\B_n \overset{j}{\las} \L)_{[x_n, \infty)} \wedge (\dir{i}{x_n} < \dil{j}{x_n} < \infty)} \\
				& \quad + \sum_{m<k<n} \ind{(\R_m \overset{i}{\raw} \BB_{k,n})_{[x_m, x_n]} \wedge (\BB_{k,n} \overset{j}{\lrl} \L)_{[x_k, x_n + \dil{j}{x_n}]} \\
				&\qquad \qquad \wedge (x_n + \dil{j}{x_n} \la \L)_{(x_n + \dil{j}{x_n},\infty)} \wedge (\dir{i}{x_k} < \dil{j}{x_n} < \infty)} \\
				&\quad + \sum_{m<k<n} \ind{(\R_m \overset{i}{\raw} \BB_{k,n})_{[x_m, x_n]} \wedge (\BB_{k,n} \overset{j}{\las} \L)_{[x_k, \infty)} \wedge (\dir{i}{x_k} < \dil{j}{x_n}< \infty)}.
		\end{align*}
        We note that the first three lines correspond to the two cases where $\R_m$ collides with an ``original'' blockade $\B$ and the last three lines correspond to the two cases where $\R_m$ collides with a generated blockade $\BB$. 
		By the mass transport principle \eqref{eq:mtp}, we have 
		\begin{align*}
			s_w = \E \sum_{n>1} Z_w(1,n) = \E \sum_{m<1} Z_w(m,1).
		\end{align*}
		We expand the right-hand side of the above:
		\begin{equation} \label{eq:zw-expand}
		\begin{split}
			\E \sum_{m<1} Z_w(m,1)
			= \sum_{i=1}^\infty &\sum_{j=1}^\infty \P\Big( \B_1 \wedge (\R_m \overset{i}{\raw} x_1)_{[x_m, x_1)} \wedge (x_1 \overset{j}{\lrl} \L)_{(x_1, x_1 + \dil{j}{x_1}]} \\
			&\qquad \qquad \wedge (x_1 + \dil{j}{x_1} \la \L)_{(x_1 + \dil{j}{x_1}, \infty)} \wedge (\dir{i}{x_1} < \dil{j}{x_1} < \infty)\Big) \\
			&\quad + \P\Big(\B_1 \wedge (\R_m \overset{i}{\raw} x_1)_{[x_m, x_1)} \wedge (x_1 \overset{j}{\las} \L)_{(x_1, \infty)} \wedge (\dir{i}{x_1} < \dil{j}{x_1} < \infty) \Big) \\
			&\quad + \sum_{m<k<1} \P \Big( (\R_k \lrlb \L_1)_{[x_k, x_1]} \wedge (\R_m \overset{i}{\raw} x_k)_{[x_m, x_k)} \wedge (x_1 \overset{j}{\lrl} \L)_{(x_1, x_1 + \dil{j}{x_1}]} \\
			&\qquad \qquad \wedge (x_1 + \dil{j}{x_1} \la \L)_{(x_1 + \dil{j}{x_1}, \infty)} \wedge (\dir{i}{x_k} < \dil{j}{x_1} < \infty)\Big) \\
			&\quad + \sum_{m<k<1} \P\Big((\R_k \lrlb \L_1)_{[x_k, x_1]} \wedge (\R_m \overset{i}{\raw} x_k)_{[x_m, x_k)} \wedge (x_1 \overset{j}{\las} \L)_{(x_1, \infty)} \\
			&\qquad \qquad \wedge (\dir{i}{x_k} < \dil{j}{x_1} < \infty) \Big).
		\end{split}
		\end{equation}
		
		Conditional on the event $\dir{i}{x_1} < \dil{j}{x_1} < \infty$, events occurring on disjoint intervals are independent. We carefully evaluate the following conditional probability as an example of \emph{the renewal argument} with conditioning and apply the same argument for the rest.

        \begin{claim} \thlabel{cl:cond-renewal}
            $\P((\R_m \overset{i}{\raw} x_1)_{[x_m, x_1)} \mid \dir{i}{x_1} < \dil{j}{x_1} < \infty) = \beta^i$.
        \end{claim}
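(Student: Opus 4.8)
The plan is to disentangle the reaction randomness at the site $x_1$ from the geometry that governs the conditioning event. Write $E := \{\dir{i}{x_1} < \dil{j}{x_1} < \infty\}$, and recall that in $(\R_m \overset{i}{\raw} x_1)_{[x_m,x_1)}$ a blockade sits at $x_1$, that $\R_m$ is the arrow making the $i$th arrival at it, and that the event requires the first $i$ collisions with that blockade to all be \emph{weak} (the right arrow is annihilated and the blockade survives). The first point is that $E$ is independent of the outcomes of the collisions that take place \emph{at} $x_1$: indeed $\dir{i}{x_1}$ is defined through the process restricted to $(-\infty,x_1)$, in which $x_1$ is merely a marked point and no collision occurs there, and $\dil{j}{x_1}$ through the process on $(x_1,\infty)$, so $E$ is a function of the velocities, the spacings, and the reaction outcomes of the collisions not involving $x_1$.

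Next I would pin down the arrivals at $x_1$ from the left. On $\{\dir{i}{x_1} < \infty\}$, which contains $E$, the location $x_1$ is visited at least $i$ times from the left in the $(-\infty,x_1)$-process; by the renewal property recalled above these $i$ visits occur at the times $\dir{1}{x_1} < \cdots < \dir{i}{x_1}$ and are made by right arrows that are determined by the velocities and spacings --- the $i$th being $\R_m$, started at $x_m$ --- and, since in a one-sided process a right arrow never overtakes a particle ahead of it, the first $i$ of these visits depend only on the particles started in $[x_m,x_1)$. Now adjoin a blockade at $x_1$. As long as its first $i-1$ collisions are weak it is still present, and absorbing an arriving right arrow at $x_1$ has no effect on the particles in $[x_m,x_1)$ --- once past $x_1$ a right arrow never interacts again, there being no left arrows to its right --- so on $\{\dir{i}{x_1} < \infty\}$ the first $i$ arrivals at the blockade are exactly these $i$ visits. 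Hence $(\R_m \overset{i}{\raw} x_1)_{[x_m,x_1)}$ reduces to the event $C$ that the reaction outcomes attached to those $i$ arrivals are all weak.

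Finally, each such outcome is weak (the branch $\R \meet \B \ras \B$, of probability $\beta$) independently across the $i$ arrivals, and these outcomes are independent of the geometry; since $E$ is geometry-measurable,
\[
\P\big((\R_m \overset{i}{\raw} x_1)_{[x_m,x_1)} \,\big|\, E\big) = \P(C \mid E) = \beta^i,
\]
which is the Claim.

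The step I expect to be the main obstacle is the second one: one must check carefully that the sequence of arrivals at $x_1$ really is the same in the one-sided process defining $\dir{i}{x_1}$ and in the process that carries a blockade at $x_1$, and that passing to the sub-interval $[x_m,x_1)$ costs nothing. Both points reduce to the elementary observation that a right arrow which has reached $x_1$ in a one-sided process can be ignored from then on, together with the renewal property; once this bookkeeping is in place, the very same template handles the other conditional probabilities appearing in \eqref{eq:zw-expand}.
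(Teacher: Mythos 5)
Your argument is correct, and it reaches the paper's conclusion by a somewhat different route. The paper partitions $(\R_m \overset{i}{\raw} x_1)_{[x_m,x_1)}$ over the indices $m=m_i<\cdots<m_0=1$ of the successive weak visitors, telescopes the conditional probability into a product of $i$ factors, and applies the change-of-measure identity \eqref{eq:changeofmeasure-2} to each factor to peel off one $\beta$ at a time, the residual ``reach'' event having conditional probability one because of the conditioning on $\dir{i}{x_1}<\infty$. You instead put the independence up front: the conditioning event is measurable with respect to the spacings, velocities, and reaction outcomes away from $x_1$; the arrival sequence at $x_1$ is determined by that same randomness and is unchanged by inserting a blockade at $x_1$ (nothing at or beyond $x_1$ can influence the space-time region an arrow traverses before reaching $x_1$); and the target event is exactly that the $i$ fresh reaction outcomes attached to the first $i$ collisions at $x_1$ all land on the weak branch, an independent $\beta^i$-event. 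Both proofs rest on the same two facts---the renewal/one-sidedness of visits to $x_1$ and the i.i.d.\ outcome variables at each collision---but your packaging avoids the nested-conditioning bookkeeping and makes transparent why the answer is independent of $j$ and of the conditioning altogether, while the paper's sequential version is the one that transfers most mechanically to the remaining factors in \eqref{eq:zw-expand} (though, as you note, your template handles those as well). One caveat you share with the paper: read literally with $m$ a fixed index, the left-hand side requires the particular particle $\b_m$ to be the $i$th visitor and is therefore smaller than $\beta^i$; both arguments implicitly treat $m$ as the geometry-determined index of that visitor, i.e.\ as unioned over, which is how the claim is consumed inside the sum over $m<1$ in \eqref{eq:zw-expand}.
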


        \begin{proof}
            We first partition the event by the indices of the particles that weakly visits $x_1$:
		\begin{align}
			&\P((\R_m \overset{i}{\raw} x_1)_{[x_m, x_1)} \mid \dir{i}{x_1} < \dil{j}{x_1} < \infty)  \label{eq:cond-example}\\
			&\qquad = \P\left(\bigcup_{m = m_i < m_{i-1} < \cdots < m_1 < m_0 =1} \bigcap_{\ell=1}^i \ (\R_{m_\ell} \overset{1}{\raw} x_{m_{\ell -1}})_{[x_{m_\ell}, x_{m_{\ell-1}})} \ \Bigg \vert \ \dir{i}{x_1} < \dil{j}{x_1} < \infty \right). \nonumber
		\end{align}
		Then, we apply the definition of conditional probability in a nested manner and obtain the following product of conditional probabilities:
		\begin{align*}
			\eqref{eq:cond-example} = \prod_{\ell=1}^i \P\left(\bigcup_{m_\ell < m_{\ell-1}}(\R_{m_\ell} \overset{1}{\raw} x_{m_{\ell-1}})_{[x_{m_\ell}, x_{m_{\ell-1}})} \ \Bigg \vert \ \bigcup_{m_{\ell-1}< \cdots < m_0 = 1} \bigcap_{\ell' =1}^{\ell-1} (\R_{m_{\ell'}} \overset{1}{\raw} x_{m_{\ell' -1}}), \dir{i}{x_1} < \dil{j}{x_1} < \infty\right). 
		\end{align*}
		By \eqref{eq:changeofmeasure-2}, 
		\begin{align*}
			&\P\left(\bigcup_{m_\ell < m_{\ell-1}}(\R_{m_\ell} \overset{1}{\raw} x_{m_{\ell-1}})_{[x_{m_\ell}, x_{m_{\ell-1}})} \ \Bigg \vert \ \bigcup_{m_{\ell-1}< \cdots < m_0 = 1} \bigcap_{\ell' =1}^{\ell-1} (\R_{m_{\ell'}} \overset{1}{\raw} x_{m_{\ell' -1}}), \dir{i}{x_1} < \dil{j}{x_1} < \infty\right) \\
			&\quad = \beta \P\left(\bigcup_{m_\ell < m_{\ell-1}}(\R_{m_\ell} \overset{1}{\ra} x_{m_{\ell-1}})_{[x_{m_\ell}, x_{m_{\ell-1}})} \ \Bigg \vert \ \bigcup_{m_{\ell-1}< \cdots < m_0 = 1} \bigcap_{\ell' =1}^{\ell-1} (\R_{m_{\ell'}} \overset{1}{\raw} x_{m_{\ell' -1}}), \dir{i}{x_1} < \dil{j}{x_1} < \infty\right) \\
			&\quad = \beta \P\left((\R \ra x_{m_{\ell-1}})_{(\infty, x_{m_{\ell-1}})} \ \Bigg \vert \ \bigcup_{m_{\ell-1}< \cdots < m_0 = 1} \bigcap_{\ell' =1}^{\ell-1} (\R_{m_{\ell'}} \overset{1}{\raw} x_{m_{\ell' -1}}), \dir{i}{x_1} < \dil{j}{x_1} < \infty\right).
		\end{align*}
		Note that the probability in the last line is $1$ since we are conditioning on $\dir{i}{x_1}< \infty$. Thus,
		\begin{align*}
			\P((\R_m \overset{i}{\raw} x_1)_{[x_m, x_1)} \mid \dir{i}{x_1} < \dil{j}{x_1} < \infty)  = \beta^i.
		\end{align*}
        \end{proof}

		Applying the above argument to the conditional events on each disjoint interval and we have that the first term on the right-hand side of \eqref{eq:zw-expand} is equal to
		\begin{align*}
			p \xi \beta^{-1} q \beta^{i+j} \P(\dir{i}{x_1} < \dil{j}{x_1} < \infty).
		\end{align*}
		The second term on the right-hand side of \eqref{eq:zw-expand} is equal to
		\begin{align*}
			p \alpha\beta^{-1} \beta^{i+j} \P(\dir{i}{x_1} < \dil{j}{x_1} < \infty).
		\end{align*}
		
		Note that by \eqref{eq:changeofmeasure-1}, $\hat p = (b/c)\P(\R_1 \lrl \L)$. For the third and fourth terms on the right-hand side of \eqref{eq:zw-expand}, we note that since $x_k < x_1$, conditional on $\dil{j}{x_1}$, $\dir{i}{x_k}$ and $\dir{i}{x_1}$ have the same distribution for any $i$ and $j$. Thus 
		\begin{align*}
			\P(\dir{i}{x_k} < \dil{j}{x_1} < \infty)= \P(\dir{i}{x_1} < \dil{j}{x_1} < \infty).
		\end{align*}
		Then, the third and fourth terms on the right-hand side of \eqref{eq:zw-expand} after summing over $k$ are equal to
		\begin{align*}
			\hat{p} \xi \beta^{-1} q \beta^{i+j} \P(\dir{i}{x_1} < \dil{j}{x_1} < \infty) \quad \text{and} \quad \hat{p} \alpha \beta^{-1} \beta^{i+j} \P(\dir{i}{x_1} < \dil{j}{x_1} < \infty).
		\end{align*}
		
		Summing all four terms over $i$ and $j$ and we have
		\begin{align*}
			\E \sum_{m<1} Z_w(m,1) = (p + \hat{p}) \beta^{-1}(\xi q + \alpha)  \sum_{i=1}^\infty \sum_{j=1}^\infty \beta^{i+j} \P(\dir{i}{x_1} < \dil{j}{x_1} < \infty) = (p + \hat{p}) \beta^{-1}(\xi q + \alpha)  S.
		\end{align*}
		By \thref{lem:cap-s}, we have 
		\begin{equation} \label{eq:sw-final}
			s_w = \frac{(p+\hat{p})(\beta q^2) (\xi q + \alpha)}{2(1-\beta q)^2}.
		\end{equation}
		
		Next let us consider 
		\begin{align*}
			s_{mu} = \P((0\la \b) \wedge (\R_1 \lrl \B) + \P((0\la \b) \wedge (\R_1 \lrl \BB)).
		\end{align*}
		On these events, after $\R_1$ mutually annihilates with a blockade, another left moving particle must visit the location of the blockade from the right and then visit the origin. The ordering of the visits gives rise to an event comparing three times: the time of the last weak collision with the blockade from the right, the time $\R_1$ mutually annihilates with the blockade, and the time a $j$th particle visits the location of the blockade from the right. We define the following indicator:
		\begin{align*}
			Z_{mu} (m,n) 
			:= \sum_{i=1}^\infty &\sum_{j=1}^\infty \ind{(\R_m \overset{i}{\lrl} \B_n)_{[x_m, x_n]} \wedge (\B_n \overset{j-1}{\law} \L)_{[x_n, x_n + \dil{j-1}{x_n}]} \wedge (x_n + \dil{j-1}{x_n} \la \L)_{(x_n + \dil{j-1}{x_n}, \infty)} \\
			&\qquad \qquad \wedge (\dil{j-1}{x_n} < \dir{i}{x_n} < \dil{j}{x_n}< \infty)} \\
			&\quad + \sum_{m<k<n} \ind{(\R_m \overset{i}{\lrl} \BB_{k,n})_{[x_m, x_n]} \wedge (\BB_{k,n} \overset{j-1}{\law} \L)_{[x_k, x_n + \dil{j-1}{x_n}]} \\
			&\qquad \qquad \wedge (x_n + \dil{j-1}{x_n} \la \L)_{(x_n + \dil{j-1}{x_n}, \infty)} \wedge (\dil{j-1}{x_n} < \dir{i}{x_k} < \dil{j}{x_n}< \infty)}
		\end{align*}
		Again, by the mass transport principle \eqref{eq:mtp}, we have 
		\begin{align*}
			s_{mu} = \E \sum_{n>1} Z_{mu}(1,n) = \E \sum_{m<1} Z_{mu}(m,1).
		\end{align*}
		With essentially the same arguments for \eqref{eq:zw-expand}, we have
		\begin{align*}
			\E \sum_{m<1} Z_{mu} (m,1) = (p+ \hat{p})\xi\beta^{-2} \sum_{i=1}^\infty \sum_{j=1}^\infty \beta^{i+j} \P(\dil{j-1}{x_1} < \dir{i}{x_1} < \dil{j}{x_1}< \infty).
		\end{align*}
		By \thref{lem:triple-dist}, we have
		\begin{equation} \label{eq:smu-final}
			s_{mu} =  \frac{(p+\hat p)\xi q^2 }{2 (1-\beta q)}.
		\end{equation}
		
        Combining \eqref{eq:sw-final} and \eqref{eq:smu-final} we have
		\begin{equation}
			s = s_w + s_{mu} = \frac{(p+\hat p)(1-\alpha)(1-\beta)q^2}{2 (1-\beta q)^2}.
		\end{equation}
	\end{proof}

	\begin{proof}[Proof of \eqref{eq:r-final}]
		Recall that
		\begin{align*}
			r = \P((0 \not \la \b) \wedge (\R_1 \la \B)) + \P((0 \not \la \b) \wedge (\R_1 \la \BB)).
		\end{align*}
		As is in the derivation of $s$, we let $r = r_w + r_{mu}$ where
		\begin{align*}
			r_w &:=  \P((0 \not \la \b) \wedge (\R_1 \raw \B)) + \P((0 \not \la \b) \wedge (\R_1 \raw \BB)\\
			r_{mu} &:= \P((0 \not \la \b) \wedge (\R_1 \lrl \B) + \P((0 \not \la \b) \wedge (\R_1 \lrl \BB)).
		\end{align*}
		
		There are two cases for $r_w$. After $\R_1$ weakly collides with a blockade, either all following collisions with the blockades are weak, or the blockade is mutually annihilated from the right and no other particle visits from the right. For the second case to happen, the blockade needs to annihilate $\R_1$ before it mutually annihilates with a particle on the right. We define the following indicator:
		\begin{align*}
			Y_w(m,n) := \sum_{i=1}^\infty &\sum_{j=1}^\infty \ind{(\R_m \overset{i}{\raw} \B_n)_{[x_m, x_n]} \wedge (\B_n \overset{j-1}{\law} \L)_{[x_n, \infty)}} \\
			&\quad + \ind{(\R_m \overset{i}{\raw} \B_n)_{[x_m, x_n]} \wedge (\B_n \overset{j}{\lrl} \L)_{[x_n, x_n + \dil{j}{x_n}]} \wedge (x_n + \dil{j}{x_n} \not \la \L)_{(x_n + \dil{j}{x_n}, \infty)} \\
			&\qquad \qquad \wedge (\dir{i}{x_n} < \dil{j}{x_n}< \infty)} \\
			&\quad + \sum_{m<k<n} \ind{(\R_m \overset{i}{\raw} \BB_{k,n})_{[x_m, x_n]} \wedge (\BB_{k,n} \overset{j-1}{\law} \L)_{[x_k, \infty)}} \\
			&\quad + \sum_{m<k<n} \ind{(\R_m \overset{i}{\raw} \BB_{k,n})_{[x_m, x_n]} \wedge (\BB_{k,n} \overset{j}{\lrl} \L)_{[x_k, x_n + \dil{j}{x_n}]} \\
			&\qquad \qquad \wedge (x_n + \dil{j}{x_n} \not \la \L)_{(x_n + \dil{j}{x_n}, \infty)}\wedge (\dir{i}{x_k} < \dil{j}{x_n}<\infty)}.
		\end{align*}
		By the mass transport principle \eqref{eq:mtp}, we have 
		\begin{align*}
			r_w = \E \sum_{n>1} Y_w(1,n) = \E \sum_{m<1} Y_w(m,1).
		\end{align*}
		We expand the right-hand side of the above:
		\begin{align}
			\E \sum_{m<1} Y_w(m,1)
			= \sum_{i=1}^\infty &\sum_{j=1}^\infty \P\Big(\B_1 \wedge (\R_m \overset{i}{\raw} x_1)_{[x_m, x_1)} \wedge (x_1 \overset{j-1}{\law} \L)_{(x_1, \infty)} \Big) \label{eq:yw-1}\\
			&\quad + \P \Big(\B_1 \wedge (\R_m \overset{i}{\raw} x_1)_{[x_m, x_1)} \wedge (x_1 \overset{j}{\lrl} \L)_{(x_1, x_1 + \dil{j}{x_1}]} \\
			&\qquad \qquad \wedge (x_1 + \dil{j}{x_1} \not \la \L)_{(x_1 + \dil{j}{x_1}, \infty)} \wedge (\dir{i}{x_1}< \dil{j}{x_1} < \infty)\Big) \label{eq:yw-2}\\
			&\quad + \sum_{m<k<1} \P \Big( (\R_k \lrlb \L_1)_{[x_k, x_1]} \wedge (\R_m \overset{i}{\raw} x_k)_{[x_m, x_k)} \wedge (x_1 \overset{j-1}{\law} \L)_{(x_1, \infty)}\Big) \label{eq:yw-3}\\
			&\quad + \sum_{m<k<1} \P \Big( (\R_k \lrlb \L_1)_{[x_k, x_1]} \wedge (\R_m \overset{i}{\raw} x_k)_{[x_m, x_k)} \wedge (x_1 \overset{j}{\lrl} \L)_{(x_1, x_1 + \dil{j}{x_1}]} \\
			&\qquad \qquad \wedge (x_1 + \dil{j}{x_1} \not \la \L)_{(x_1, \dil{j}{x_1}, \infty)} \wedge (\dir{i}{x_k} < \dil{j}{x_1} < \infty)\Big). \label{eq:yw-4}
		\end{align}
		For the terms \eqref{eq:yw-1} and \eqref{eq:yw-3}, we note that $(x_1 \overset{j-1}{\law} \L)_{(x_1, \infty)}$ implies that there is no $j$-th particle that visits $x_1$ from the right. The renewal argument then implies
		\begin{align*}
			\eqref{eq:yw-1} + \eqref{eq:yw-3} = \sum_{i=1}^\infty \sum_{j=1}^\infty (p+ \hat p) (\beta q)^{i+ j-1} (1-q) = \frac{(p+\hat p)(1-q)\beta q}{(1-\beta q)^2}.
		\end{align*}
		The terms \eqref{eq:yw-2} and \eqref{eq:yw-4} follow from similar arguments used in the proof for $s$ and 
		\begin{align*}
			\eqref{eq:yw-2} + \eqref{eq:yw-4} = (p + \hat p)  \xi  (1-q) \sum_{i=1}^\infty \sum_{j=1}^\infty\beta^{i+j-1} \P(\dir{i}{x_1} < \dil{j}{x_1} < \infty) = (p + \hat p)\xi \beta^{-1} (1-q) S.
		\end{align*}
		Together, 
		\begin{equation} \label{eq:rw-final}
			r_w = \frac{(p+ \hat p)\beta q(1-q)(2+\xi q)}{2(1-\beta q)^2}.
		\end{equation}
		
		For $r_{mu}$, the location of the blockade that mutually annihilates with $\R_1$ must not be visited from the right after the mutual annihilation occurs. We define the following indicator:
		\begin{align*}
			Y_{mu}(m,n) := \sum_{i=1}^\infty &\sum_{j=1}^\infty \ind{(\R_m \overset{i}{\lrl} \B_n)_{[x_m, x_n]} \wedge (\B_n \overset{j-1}{\law} \L)_{[x_n, \infty)} \wedge (\dil{j-1}{x_n}< \dir{i}{x_n} <\infty)} \\
			&\quad + \sum_{m<k<n} \ind{(\R_m \overset{i}{\lrl} \BB_{k,n})_{[x_m, x_n]} \wedge (\BB_{k,n} \overset{j-1}{\law} \L)_{[x_k, \infty)} \wedge (\dil{j-1}{x_n}< \dir{i}{x_k} <\infty)}.
		\end{align*}
		Again, by the masstransport principle \eqref{eq:mtp}, we have
		\begin{align*}
			r_{mu} = \E \sum_{n>1} Y_{mu}(1,n) = \E \sum_{m<1} Y_{mu}(m,1).
		\end{align*}
		By similar arguments as before, we have
		\begin{align*}
			\E \sum_{m<1} Y_{mu}(m,1) &= (p+\hat p) \xi \beta^{-1} (1-q) \sum_{i=1}^\infty \sum_{j=1}^\infty \beta^{i+j-1} \P(\dil{j-1}{x_1}< \dir{i}{x_1} <\infty) \\
			&= (p+\hat p) \xi \beta^{-1} (1-q) \left(\frac{\beta q}{1-\beta q}+ S \right).
		\end{align*}
		By \thref{lem:cap-s}, we have
		\begin{equation} \label{eq:rmu-final}
			r_{mu} = \frac{(p+ \hat p) \xi q (1-q) (2-\beta q)}{2(1-\beta q)^2}.
		\end{equation}
		
        Combining \eqref{eq:rw-final} and \eqref{eq:rmu-final} we have
		\begin{align*}
			r = r_w + r_{mu} = \frac{(p+\hat p) \alpha q(1-q)}{(1-\beta q)^2}.
		\end{align*}
	\end{proof}

	\bibliographystyle{alpha}
	\bibliography{BA}
	
\end{document}